\newtheorem{teo}{Theorem}
\newtheorem{prop}{Proposition}
\newtheorem{cor}[teo]{Corollary}
\newtheorem{lema}[teo]{Lemma}
\newcommand{\CC}{\mathbb{C}}
\newcommand{\RR}{\mathbb{R}}
\newcommand{\ZZ}{\mathbb{Z}}
\newcommand{\Ac}{\mathcal{A}}
\newcommand{\Hc}{\mathcal{H}}
\newcommand{\Tc}{\mathcal{T}}
\newcommand{\Lc}{\mathcal{L}}
\newcommand{\Uc}{\mathcal{U }}
\newcommand{\rank}{\operatorname{rank}}
\newcommand{\espan}{\operatorname{span}}
\title{{\bf Knit product of finite groups and sampling}}
\author{
{\bf Antonio~G. Garc\'{\i}a}\thanks{E-mail:\texttt{agarcia@math.uc3m.es}}, \, 
{\bf Miguel~A. Hern\'andez-Medina}\thanks{E-mail:\texttt{miguelangel.hernandez.medina@upm.es}}\, 
{\bf and  Alberto Ibort}\thanks{E-mail:\texttt{albertoi@math.uc3m.es}} \,\, }
\date{}
\begin{document}
\maketitle
\begin{itemize}
\item[*\ddag] Departamento de Matem\'aticas, Universidad Carlos III de Madrid,
 Avda. de la Universidad 30, 28911 Legan\'es-Madrid, Spain.
\item[\dag] Information Processing and Telecommunications Center, Universidad Polit\'ecnica de Madrid, Departamento de Matem\'atica Aplicada a las Tecnolog\'{\i}as de la Informaci\'on y las Comunicaciones, E.T.S.I.T., Avda. Complutense 30, 28040 Madrid, Spain.
\end{itemize}
\begin{abstract}
A finite sampling theory associated with a unitary representation of a finite non Abelian group $\mathbf{G}$ on a Hilbert space is stablished. The non Abelian group 
$\mathbf{G}$ is a knit product $\mathbf{N}\bowtie \mathbf{H}$ of two finite subgroups $\mathbf{N}$ and $\mathbf{H}$. Sampling formulas where the samples are indexed by either $\mathbf{N}$ or $\mathbf{H}$ are obtained. Using suitable expressions for the involved samples, the problem is reduced to obtain dual frames in the Hilbert space $\ell^2(\mathbf{G})$ having a unitary invariance property; this is done by using matrix analysis techniques. An example involving dihedral groups illustrates the obtained sampling results.
\end{abstract}
{\bf Keywords}: Knit product of groups; Unitary representation of a group; Finite unitary-invariant subspaces; Finite frames; Dual frames; Left-inverses; Sampling expansions.

\noindent{\bf AMS}: 20C40; 42C15; 94A20.

\section{Statement of the problem}
\label{section1}
In this paper an abstract sampling theory associated with a unitary representation of a non Abelian group $\mathbf{G}$, which is the knit-product  $\mathbf{G}=\mathbf{N}\bowtie \mathbf{H}$ of two finite subgroups $\mathbf{N}$ and $\mathbf{H}$, on a Hilbert space $\Hc$ is obtained. Specifically, given $\mathbf{G} \ni g \mapsto U(g)\in \Uc(\Hc)$ a unitary representation of  the group $\mathbf{G}$ on a Hilbert space $\Hc$, for a fixed vector $\mathsf{a}\in  \Hc$ the sampling subspace 
$\Ac_\mathsf{a}:=\espan \{U(g)\mathsf{a}\}_{g\in \mathbf{G}}$ of $\Hc$ is considered. The aim is to obtain sampling formulas in $\Ac_\mathsf{a}$  having a compatible $U$-structure and involving samples indexed by  either $\mathbf{N}$ or  $\mathbf{H}$. Namely, fixing $\kappa$ vectors $\mathsf{b}_k$ in $\Hc$, which do not necessarily belong to $\Ac_\mathsf{a}$, for each $\mathsf{f}\in \Ac_\mathsf{a}$ we consider the data samples
\begin{equation}
\label{defsamples}
\Lc_k \mathsf{f}(\nu):=\big\langle \mathsf{f}, U(\nu)\mathsf{b}_k \big\rangle_{\Hc}\,, \,\, \nu\in \mathbf{N},\quad \text{or}\quad \Lc_k \mathsf{f}(\tau):=\big\langle \mathsf{f}, U(\tau)\mathsf{b}_k \big\rangle_{\Hc}\,, \,\, \tau\in \mathbf{H}\,,
\end{equation}
for $k=1,2,\dots,\kappa$. Thus, the aim is to study the existence of sampling formulas in $\Ac_\mathsf{a}$ having the form
\begin{equation}
\label{fsampling}
\mathsf{f}=\sum_{k=1}^\kappa \sum_{\nu \in \mathbf{N}}\Lc_k\mathsf{f}(\nu)\,U(\nu)\mathsf{c}_k\quad \text{or} \quad \mathsf{f}=\sum_{k=1}^\kappa \sum_{\tau \in \mathbf{H}}\Lc_k\mathsf{f}(\tau)\,U(\tau)\mathsf{d}_k\quad \text{for each $\mathsf{f}\in \Ac_\mathsf{a}$}\,,
\end{equation}
for some vectors $\mathsf{c}_k$ or $\mathsf{d}_k$, $k=1,2, \dots,\kappa$, in $\Ac_\mathsf{a}$.

\medskip

All the classical sampling formulas have a structure compatible with an underlying group which defines the sampling space. Thus, the Paley-Wiener space $PW_\pi$, consisting of bandlimited functions to $[-\pi, \pi]$ in $L^2(\RR)$, is a particular shift-invariant subspace with underlying group $(\ZZ,+)$ represented unitarily on $L^2(\RR)$ by translations, and  the famous Shannon sampling formula reads
\[
f(t)=\sum_{n\in \ZZ} f(n)\,\dfrac{\sin \pi(t-n)}{\pi(t-n)}\,,\quad t\in \RR\,.
\]
Whether we sample at the subgroup $m\ZZ$ we must consider $\kappa \geq m$ sampling channels as proposed in formulas \eqref{fsampling}. In classical finite or infinite sampling the involved samples usually are averages, as in \eqref{defsamples}, or pointwise samples, and the underlying group is a locally compact Abelian group allowing the use of a classical Fourier analysis. See, among others, Refs.~\cite{hector:14,hector:16,frazier:94,garcia:00,garcia:06,garcia:15,kluvanek:65,stankovic:06}. 

For non Abelian groups a classical Fourier transform is not available and consequently other techniques should be considered as in \cite{barbieri:15}; for the finite case, see, for instance, Part II of Ref.~\cite{terras:99}. Non Abelian groups profusely appear in the mathematical literature, having important applications in different fields as geometry, signal processing, mathematical physics, chemistry, etc.
\cite{dodson:07,kettle:07,sinha:10,stankovic:05,stankovic:06,terras:99}. An important number of these groups have in common that they are obtained from Abelian ones. This is the case of semidirect and knit products of groups, which include as examples the dihedral groups $D_{2N}$, the infinite dihedral group $D_\infty$, crystallographic groups, Euclidean or Heisenberg motion groups, etc.
 
In this paper we propose a finite sampling theory associated with  a non Abelian group obtained from knit product and samples like in \eqref{defsamples}; the obtained results are just based on linear algebra techniques: finite frames and left-inverse matrices.   As far as we know this is a novel approach to finite sampling related to a non Abelian group that intends to be the first step in this direction.

\medskip

Next we briefly detail the mathematical techniques used throughout the paper; they rely on the expression of the samples as $\Lc_k \mathsf{f}(\nu)=\big\langle \boldsymbol{\alpha}, \mathbf{g}_{k,\nu} \big\rangle_{\ell^2(\mathbf{G})}$, where $\boldsymbol{\alpha}=(\alpha_g)_{g\in \mathbf{G}}$ is the coefficients vector of 
$\mathsf{f} \in \Ac_\mathsf{a}$ in the basis $\{U(g)\mathsf{a}\}_{g\in \mathbf{G}}$, and $\mathbf{g}_{k,\nu}\in \ell^2(\mathbf{G})$ is  a vector obtained from the cross-covariance of the finite sequences 
$\big\{U(g)\mathsf{a}\big\}_{g\in G}$ and $\big\{U(h)\mathsf{b}_k\big\}_{h\in \mathbf{G}}$; the cross-covariance used here generalizes the concept introduced by Kolmogorov in \cite{kolmogorov:41}. Thus, the stable recovery of $\boldsymbol{\alpha} \in \ell^2(\mathbf{G})$, or equivalently of  $\mathsf{f}\in \Ac_\mathsf{a}$, from the given data $\{\Lc_k \mathsf{f}(\nu)\}_{k,\nu}$, i.e., the existence of two constants $0<A\le B$ such that
\[
A\|\mathsf{f}\|^2 \le \sum_{k,\nu} |\Lc_k \mathsf{f}(\nu)|^2 \le B\|\mathsf{f}\|^2\quad \text{for all $\mathsf{f}\in \Ac_\mathsf{a}$}\,,
\]
depends on whether $\{\mathbf{g}_{k,\nu}\}_{k,\nu}$ forms a frame for $\ell^2(\mathbf{G})$; or equivalently, if it forms a spanning set for $\ell^2(\mathbf{G})$ since $\Ac_\mathsf{a}$ is a finite dimensional subspace. Recall that a sequence $\{x_n\}$ is a frame for a separable Hilbert space $\Hc$ if there exist two constants $0<A\le B$, the frame bounds, such that
\[
A\|x\|^2 \leq \sum_n |\langle x, x_n \rangle|^2 \leq B \|x\|^2 \,\, \text{ for all } x\in \Hc \,.
\]
Given a frame $\{x_n\}$ for $\Hc$ the representation property of any vector $x\in \Hc$ as a series $x=\sum_n c_n x_n$ is retained, but, unlike the case of Riesz bases, the uniqueness of this representation (for overcomplete frames) is sacrificed. Suitable frame coefficients $c_n$ which depend continuously and linearly on $x$ 
are obtained by using the dual frames $\{y_n\}$ of $\{x_n\}$, i.e., 
$\{y_n\}$ is another frame for $\Hc$ such that $x=\sum_n \langle x, y_n \rangle \,x_n=\sum_n \langle x, x_n \rangle \,y_n $ for each $x\in \Hc$. For
more details on the frame theory see, for instance, the monograph \cite{ole:16} and references therein; see also Ref.~\cite{casazza:14} for finite frames.

In order to derive the left sampling formula in \eqref{fsampling} we assume that the subgroup 
$\mathbf{N}$  is Abelian (respectively $\mathbf{H}$ Abelian for deriving the right sampling formula); thus we construct dual frames of $\{\mathbf{g}_{k,\nu}\}_{k,\nu}$ having the special needed structure. This is achieved by constructing some specific left-inverses of the cross-covariance matrix $\mathbb{R}_{\mathsf{a},\mathbf{b}}$ that gathers all the cross-covariances information; thus, group theory meets matrix analysis. 

\medskip

The paper is organized as follows: in Section \ref{section2} we will succinctly review the knit product of groups; in Section \ref{section3}, after establishing the mathematical setting used throughout the paper we derive the main sampling results (Theorems \ref{teoGeneral} and \ref{teoGeneralH}). They are based on the existence of $G$-compatible left-inverses of the cross-covariance matrix $\mathbb{R}_{\mathsf{a},\mathbf{b}}$ and a construction for these left-inverses is also provided; finally, in Section \ref{section4} we put to work our results for a specific example involving the dihedral group $D_{2N}$ of symmetries of a regular $N$-sided polygon.

\section{A brief on the knit product $\mathbf{G}=\mathbf{N} \bowtie \mathbf{H}$ of groups}
\label{section2}
Let $\mathbf{G}$ be a group with identity $1_{\mathbf{G}}$, and let $\mathbf{N}$ and $\mathbf{H}$ be subgroups of $\mathbf{G}$ such that $\mathbf{G}=\mathbf{N}\mathbf{H}$ and $\mathbf{N}\cap \mathbf{H}=\{1_{\mathbf{G}}\}$, or, equivalently, for each $g\in \mathbf{G}$ there exists  a unique $h \in \mathbf{H}$ and a unique $n \in \mathbf{N}$ such that $g = nh $. In this case, $\mathbf{G}$ is said to be the {\em internal  knit product} (or {\em Zappa-Sz\'ep product}) of $\mathbf{N}$ and 
$\mathbf{H}$, and it is denoted by 
$\mathbf{G}=\mathbf{N}\bowtie \mathbf{H}$. For each $h\in \mathbf{H}$ and $n\in \mathbf{N}$ there exist $\alpha(n,h) \in \mathbf{N}$ and $\beta(n,h) \in \mathbf{H}$ such that $hn = \alpha(n,h)\beta(n,h)$. This defines mappings 
$\alpha^*  : \mathbf{H} \to Aut(\mathbf{N})$ and $\beta^* : \mathbf{N} \to Aut(\mathbf{H})$, where 
$\alpha^*(h)(n):=\alpha(n,h)$ and $\beta^*(n)(h):=\beta(n,h)$ for $n\in \mathbf{N}$ and $h\in \mathbf{H}$, and satisfying (see Refs.~\cite{brin:05,szep:50,zappa:42}).

\begin{enumerate}
  \item $\alpha^*$ is a group isomorfism and $\beta^*$ is a group anti-isomorfism (i.e. 
     $\beta_{n_1n_2}=\beta_{n_2}\beta_{n_1}$).
  \item $\alpha_h(n_1n_2)=\alpha_h(n_1)\alpha_{\beta_{n_1}(h)}(n_2)$ for each $h\in \mathbf{H}$ and  $n_1,n_2\in \mathbf{N}$,
  \item $\beta_n(h_1h_2)=\beta_{\alpha_{h_2}(n)}(h_1)\beta_n(h_2)$ for each $n\in \mathbf{N}$ and $h_1,h_2\in \mathbf{H}$\,,
\end{enumerate}
where we have denoted $\alpha_h(n):=\alpha^*(h)(n)$ and $\beta_n(h):=\beta^*(n)(h)$. 

\medskip

From now on, we denote the order of the involved groups as $\mathfrak{g}:=|\mathbf{G}|$, $\mathfrak{n}:=|\mathbf{N}|$ and $\mathfrak{h}:=|\mathbf{H}|$ respectively.
In case $\mathbf{N}$ is a normal subgroup of $\mathbf{G}$ then $\alpha_h(n):=hnh^{-1}$ and $\beta_n:= \mathrm{id}_H$, the corresponding knit product $\mathbf{N} \bowtie \mathbf{H}$ coincides with the {\em internal semidirect product} of $\mathbf{N}$ and $\mathbf{H}$, denoted by $\mathbf{G}=\mathbf{N}\rtimes \mathbf{H}$.

Whether $\mathbf{G}=\mathbf{N}\bowtie \mathbf{H}$ we can choose exactly one element of $\mathbf{N}$ in each left coset of the quotient set 
$\mathbf{G}/\mathbf{H}$. Thus, in case $\mathbf{N}=\{\nu_1=1_{\mathbf{G}},\, \nu_2,\dots,\nu_\mathfrak{n}\}$ we can describe the quotient 
set $\mathbf{G}/\mathbf{H}$ as
\begin{equation}
\label{G/H}
\mathbf{G}/\mathbf{H}=\big\{[1_{\mathbf{G}}=\nu_1], [\nu_2], \cdots, [\nu_\mathfrak{n}] \big\}\,.
\end{equation}
There is an external version of the knit product for groups. In this case, we have two groups $\mathbf{N}$ and $\mathbf{H}$ which are not known to be subgroups of a given group and mappings $\alpha: \mathbf{H}\times \mathbf{N} \rightarrow \mathbf{N}$ and 
$\beta: \mathbf{H}\times \mathbf{N} \rightarrow \mathbf{H}$ satisfying the  properties 1--3 above. On the product $\mathbf{N}\times \mathbf{H}$ we define a product law
\[
(n_1, h_1) (n_2, h_2) = \big(n_1 \alpha_{h_1}(n_2), \beta_{n_2}(h_1) h_2\big)\quad \text{ for any $n_1,\, n_2\in \mathbf{N}$ and $h_1,\, h_2\in \mathbf{H}$}\,.
\]
Thus, $(1_\mathbf{N},1_\mathbf{H})$ is the identity, and $(n,h)^{-1}=(\alpha_{h^{-1}}(n^{-1}),\beta_{n^{-1}}(h^{-1})$ for any $(n,h)\in \mathbf{N}\times \mathbf{H}$.
With this product the set $\mathbf{N}\times \mathbf{H}$ is a group called {\em external knit product} of the groups $\mathbf{N}$ and $\mathbf{H}$, and it is denoted again as $\mathbf{N}\bowtie \mathbf{H}$. The subsets $\mathbf{N}\times \{1_{\mathbf{H}}\}$ and $\{1_{\mathbf{N}}\}\times \mathbf{H}$ are subgroups of $\mathbf{N}\bowtie \mathbf{H}$ isomorphic to $\mathbf{N}$ and $\mathbf{H}$ respectively. Clearly, the external knit product of the groups $\mathbf{N}$ and $\mathbf{H}$ coincides with the internal knit product of subgroups  $\mathbf{N}\times \{1_{\mathbf{H}}\}$ and $\{1_{\mathbf{N}}\}\times \mathbf{H}$.

\section{Sampling associated with a unitary representation of the group $\mathbf{G}=\mathbf{N} \bowtie \mathbf{H}$}
\label{section3}
As it was said in the introduction we begin this section by establishing the mathematical setting used throughout the paper.
\subsection{The mathematical setting}
Let $\mathbf{G}$ be a finite, not necessarily Abelian, group of order $\mathfrak{g}$ with identity element $1_{\mathbf{G}}$. Let $\mathbf{G} \ni g \mapsto U(g)\in \Uc(\Hc)$ be a {\em unitary representation} of  $\mathbf{G}$ on a Hilbert space $\Hc$, i.e., a homomorphism from the group $\mathbf{G}$ into the group $\Uc(\Hc)$ of unitary operators on $\Hc$, i.e., a map satisfying  $U(gg')=U(g)U(g')$ and $U(1_{\mathbf{G}})=I_{\Hc}$.  

\medskip

From now on, for a fixed vector $\mathsf{a}\in  \Hc$ we consider the subspace $\Ac_\mathsf{a}$  of $\Hc$ spanned by the vectors $U(g)\mathsf{a}$, $g\in \mathbf{G}$, i.e., $\Ac_\mathsf{a}:=\espan \{U(g)\mathsf{a}\}_{g\in \mathbf{G}}$. In case this set is linearly independent in $\Hc$, each $\mathsf{f}\in \Ac_\mathsf{a}$ can be expressed uniquely as the  expansion $\mathsf{f}=\sum_{g\in \mathbf{G}} \alpha_g \,U(g)\mathsf{a}$, with $\alpha_g\in \CC$.

There is a close relationship between the finite sequence $\{U(g)\mathsf{a}\}_{g\in \mathbf{G}}$ in the Hilbert space $\Hc$ and the so-called {\em stationary sequences} (see Kolmogorov \cite{kolmogorov:41}). We say that the finite sequence $\{\mathsf{a}_g\}_{g\in \mathbf{G}}$ in $\Hc$ is (left) {\em $\mathbf{G}$-stationary} if 
\[
\langle \mathsf{a}_g, \mathsf{a}_{g'} \rangle_\Hc=\langle \mathsf{a}_{hg}, \mathsf{a}_{hg'} \rangle_\Hc\,,\quad \text{for all $g, g', h\in \mathbf{G}$}\,.
\]
In other words, the inner product $\langle \mathsf{a}_g, \mathsf{a}_h\rangle_\Hc$ only depends on $h^{-1}g$. Then, it is easy to deduce that  there exists a unitary representation $g\mapsto U(g)$ of the group $\mathbf{G}$ on $\Hc$ and $\mathsf{a}\in  \Hc$ such that $\mathsf{a}_g  =U(g)\mathsf{a}$, $g\in \mathbf{G}$.
We define the {\em auto-covariance} of the finite sequence $\{U(g)\mathsf{a}\}_{g\in \mathbf{G}}$ as the positive semidefinite function:
\[
r_\mathsf{a}(g):=\big\langle U(g)\mathsf{a}, \mathsf{a}\big\rangle_\Hc\,, \quad g\in \mathbf{G}\,.
\] 
Similarly, we define the {\em cross-covariance} between the finite sequences $\{U(g)\mathsf{a}\}_{g\in \mathbf{G}}$ and $\{U(g)\mathsf{b}\}_{g\in \mathbf{G}}$ where $\mathsf{a}, \mathsf{b} \in\Hc$ as
\[
r_{\mathsf{a}, \mathsf{b}}(g):=\big\langle U(g)\mathsf{a}, \mathsf{b}\big\rangle_\Hc\,, \quad g\in \mathbf{G}\,.
\]
Note that $r_{\mathsf{a}, \mathsf{b}}(g)=\overline{r_{\mathsf{b}, \mathsf{a}}(g^{-1})}$ for $\mathsf{a}, \mathsf{b} \in\Hc$ and $g\in \mathbf{G}$.

\begin{prop}\label{Rinv}
Let $\mathbb{R}_\mathsf{a}$ denote the  square matrix of order $\mathfrak{g}$ defined from the auto-covariance $r_\mathsf{a}$ as
$\mathbb{R}_\mathsf{a}:=\big(r_\mathsf{a}(t^{-1}g) \big)_{t,g\in \mathbf{G}}$. Then, the set of vectors $\{U(g)\mathsf{a}\}_{g\in \mathbf{G}}$ is linearly independent in $\Hc$ if and only if $\det \mathbb{R}_\mathsf{a} \neq 0$.
\end{prop}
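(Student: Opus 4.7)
The plan is to recognise that $\mathbb{R}_\mathsf{a}$ is nothing but the Gram matrix of the finite family $\{U(g)\mathsf{a}\}_{g\in\mathbf{G}}$, and then to invoke the standard linear-algebra fact that a finite set of vectors in a Hilbert space is linearly independent if and only if its Gram matrix is non-singular.

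First I would verify the Gram-matrix identification. Using that $U$ is a unitary representation, $U(t)^*=U(t)^{-1}=U(t^{-1})$, so
\[
\big\langle U(g)\mathsf{a},\, U(t)\mathsf{a}\big\rangle_\Hc
=\big\langle U(t^{-1})U(g)\mathsf{a},\, \mathsf{a}\big\rangle_\Hc
=\big\langle U(t^{-1}g)\mathsf{a},\, \mathsf{a}\big\rangle_\Hc
=r_\mathsf{a}(t^{-1}g),
\]
which means the $(t,g)$-entry of $\mathbb{R}_\mathsf{a}$ is precisely $\langle U(g)\mathsf{a},U(t)\mathsf{a}\rangle_\Hc$. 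In particular $\mathbb{R}_\mathsf{a}$ is Hermitian positive semidefinite.

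Next I would argue both directions through the identity
\[
\Big\|\sum_{g\in\mathbf{G}}\alpha_g\,U(g)\mathsf{a}\Big\|_\Hc^{2}
=\sum_{t,g\in\mathbf{G}}\overline{\alpha_t}\,\alpha_g\,\big\langle U(g)\mathsf{a},U(t)\mathsf{a}\big\rangle_\Hc
=\boldsymbol{\alpha}^{*}\mathbb{R}_\mathsf{a}\boldsymbol{\alpha},
\]
for any coefficient vector $\boldsymbol{\alpha}=(\alpha_g)_{g\in\mathbf{G}}\in\CC^{\mathfrak{g}}$. If the family is linearly dependent, choose a nonzero $\boldsymbol{\alpha}$ with $\sum_g\alpha_g U(g)\mathsf{a}=0$; taking the inner product of this identity with each $U(t)\mathsf{a}$ gives $\mathbb{R}_\mathsf{a}\boldsymbol{\alpha}=0$, so $\det\mathbb{R}_\mathsf{a}=0$. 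Conversely, if $\det\mathbb{R}_\mathsf{a}=0$ there is a nonzero $\boldsymbol{\alpha}$ in $\ker\mathbb{R}_\mathsf{a}$; since $\mathbb{R}_\mathsf{a}$ is positive semidefinite, $\boldsymbol{\alpha}^{*}\mathbb{R}_\mathsf{a}\boldsymbol{\alpha}=0$ then forces $\sum_g\alpha_g U(g)\mathsf{a}=0$, witnessing linear dependence.

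I do not expect any real obstacle here: the entire content of the proposition is the Gram-matrix criterion for linear independence. The only point that requires a bit of care is bookkeeping of indices — one must be explicit about which argument of $\langle\cdot,\cdot\rangle_\Hc$ is conjugate-linear and about the order of the product $t^{-1}g$ (as opposed to $g t^{-1}$), because the group is non-Abelian. With the convention fixed at the outset of the section, the calculation above makes the identification $\mathbb{R}_\mathsf{a}=\big(\langle U(g)\mathsf{a},U(t)\mathsf{a}\rangle_\Hc\big)_{t,g}$ unambiguous and the remainder is purely formal.
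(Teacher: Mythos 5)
Your proof is correct and follows essentially the same route as the paper: both identify $\mathbb{R}_\mathsf{a}$ as the Gram matrix of $\{U(g)\mathsf{a}\}_{g\in\mathbf{G}}$ via unitarity and deduce the equivalence from $\mathbb{R}_\mathsf{a}\boldsymbol{\alpha}=0$ being equivalent to $\sum_g\alpha_g U(g)\mathsf{a}$ being orthogonal to every $U(t)\mathsf{a}$, hence zero. Your explicit use of the quadratic form $\boldsymbol{\alpha}^{*}\mathbb{R}_\mathsf{a}\boldsymbol{\alpha}=\|\sum_g\alpha_g U(g)\mathsf{a}\|^2$ is just a slightly more detailed phrasing of the paper's orthogonality argument.
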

\begin{proof}
If $\det \mathbb{R}_\mathsf{a} = 0$ then there exists a vector $\boldsymbol{\lambda}=(\lambda_g)_{g\in \mathbf{G}} \in \CC^\mathfrak{g}$ such that  $\boldsymbol{\lambda} \neq \mathbf{0}$ and  $\mathbb{R}_\mathsf{a}  \boldsymbol{\lambda} = 0$. Thus  $\sum_{g\in \mathbf{G}} \lambda_g\,U(g)\mathsf{a}$ is orthogonal 
to $U(g)\mathsf{a}$ for all $g\in \mathbf{G}$ so that  $\sum_{g} \lambda_g\,U(g)\mathsf{a}=0$.  Conversely, if $\sum_{g\in G} \lambda_g\,U(g)\mathsf{a}=0$ for some $\boldsymbol{\lambda} \neq \mathbf{0}$ then the inner product  in the above expression  with each $U(t)\mathsf{a}$, $t\in \mathbf{G}$, yields   
$\mathbb{R}_\mathsf{a}  \boldsymbol{\lambda} = 0$.
\end{proof}
\subsection*{The isomorphism $\Tc^\mathbf{G}_\mathsf{a}$}
Consider the group algebra $\mathbb{C}[\mathbf{G}]$, that is, the complex linear space generated by the elements of the group $\mathbf{G}$.  Thus $\mathbb{C}[\mathbf{G}]$ has dimension $\mathfrak{g}$ and its elements can be identified with the space of functions $ \boldsymbol{\alpha} \colon \mathbf{G} \to \mathbb{C}$, $g \mapsto \boldsymbol{\alpha}(g)$; in brief $\boldsymbol{\alpha}=\big( \boldsymbol{\alpha}(g)\big)_{g\in \mathbf{G}}$.   In the finite situation we are dealing with, such functions are all obviously integrable and square integrable, hence it can be identified with $\ell^2(\mathbf{G})$ endowed with its natural inner product 
$\langle \boldsymbol{\alpha}, \boldsymbol{\beta} \rangle$ which becomes a Hilbert space isomorphic to $\CC^\mathfrak{g}$. 

The Hilbert space $\ell^2(\mathbf{G})$ supports a natural unitary representation of $\mathbf{G}$ called the {\em left regular representation} $L_s$, $s\in \mathbf{G}$, defined by
\[
L_s \boldsymbol{\alpha}(g)=\boldsymbol{\alpha}(s^{-1}g)\quad \mathrm{for} \, \, s,g\in \mathbf{G}\,.
\]
Next we define the natural surjective linear map $\Tc^\mathbf{G}_\mathsf{a}$ between $\ell^2(\mathbf{G})$ and $\Ac_\mathsf{a}$:
\begin{equation}
\label{iso}
\begin{array}[c]{ccll}
\Tc^\mathbf{G}_\mathsf{a}: & \ell^2(\mathbf{G}) & \longrightarrow & \mathcal{A}_\mathsf{a}\\
       & \boldsymbol{\alpha} & \longmapsto & \mathsf{f}=\displaystyle{\sum_{g\in \mathbf{G}} \boldsymbol{\alpha}(g)\,U(g)\mathsf{a}}\,.
\end{array}
\end{equation}
If matrix $\mathbb{R}_\mathsf{a}$ is invertible, because of Prop. \ref{Rinv},  the above map $\Tc^\mathbf{G}_\mathsf{a}$  becomes an isomorphism and it has the following {\em shifting property} with respect to the left representation $L_s$:

\begin{prop}
For any $s\in \mathbf{G}$ and $\boldsymbol{\alpha}\in \ell^2(\mathbf{G})$ we have that
\begin{equation}
\label{shiftgroup}
\Tc^\mathbf{G}_\mathsf{a}\big(L_s \boldsymbol{\alpha}\big)=U(s)\,\Tc^\mathbf{G}_\mathsf{a} (\boldsymbol{\alpha})
\end{equation}
\end{prop}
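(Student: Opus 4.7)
The plan is to verify the identity by a direct computation that unwinds the three definitions in play: the map $\Tc^\mathbf{G}_\mathsf{a}$ from \eqref{iso}, the left regular representation $L_s$, and the homomorphism property of $U$. No subtle structure of the knit product or of the matrix $\mathbb{R}_\mathsf{a}$ is needed here; the statement is really an algebraic intertwining property that holds before one even asks whether $\Tc^\mathbf{G}_\mathsf{a}$ is an isomorphism.

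First I would expand the left-hand side using the definition of $\Tc^\mathbf{G}_\mathsf{a}$, obtaining
\[
\Tc^\mathbf{G}_\mathsf{a}\bigl(L_s \boldsymbol{\alpha}\bigr)=\sum_{g\in \mathbf{G}}(L_s\boldsymbol{\alpha})(g)\,U(g)\mathsf{a}=\sum_{g\in \mathbf{G}}\boldsymbol{\alpha}(s^{-1}g)\,U(g)\mathsf{a}.
\]
Next I would perform the change of summation index $g'=s^{-1}g$, which is a bijection of $\mathbf{G}$ onto itself (here the finiteness of $\mathbf{G}$ is convenient but not essential, as the sum is just a reindexing of a finite family). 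This turns the sum into $\sum_{g'\in \mathbf{G}}\boldsymbol{\alpha}(g')\,U(sg')\mathsf{a}$.

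Finally I would use that $U$ is a homomorphism so $U(sg')=U(s)U(g')$, and factor $U(s)$ out of the sum by linearity:
\[
\sum_{g'\in \mathbf{G}}\boldsymbol{\alpha}(g')\,U(s)U(g')\mathsf{a}=U(s)\sum_{g'\in \mathbf{G}}\boldsymbol{\alpha}(g')\,U(g')\mathsf{a}=U(s)\,\Tc^\mathbf{G}_\mathsf{a}(\boldsymbol{\alpha}),
\]
which is the desired equality. There is no genuine obstacle: the only thing one has to be careful about is placing $s^{-1}$ rather than $s$ in the argument of $\boldsymbol{\alpha}$ at the first step (this is precisely why $L_s$ is defined with $s^{-1}$, so that $L_sL_t=L_{st}$ and the resulting intertwining comes out with $U(s)$ on the right-hand side rather than $U(s^{-1})$).
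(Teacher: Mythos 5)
Your computation is correct and is essentially identical to the paper's own proof: expand via the definition of $\Tc^\mathbf{G}_\mathsf{a}$, reindex with $g'=s^{-1}g$, and use the homomorphism property of $U$ to pull $U(s)$ out of the sum. Your added remark that the $s^{-1}$ in the definition of $L_s$ is what makes the intertwining come out with $U(s)$ rather than $U(s^{-1})$ is a fair observation, but the argument itself matches the paper step for step.
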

\begin{proof}
Indeed, denoting $g'=s^{-1}g$ we have
\[
\begin{split}
\Tc^\mathbf{G}_\mathsf{a}\big(L_{s} \boldsymbol{\alpha}\big)&= \sum_{g\in \mathbf{G}} \boldsymbol{\alpha}(s^{-1}g) U(g)\mathsf{a}=\sum_{g'\in \mathbf{G}} \boldsymbol{\alpha}(g') U(sg')\mathsf{a} \\
&=\sum_{g'\in \mathbf{G}} \boldsymbol{\alpha}(g') U(s)U(g')\mathsf{a}=U(s)\,\Tc^\mathbf{G}_\mathsf{a} (\boldsymbol{\alpha})
\end{split}
\]
\end{proof}
Next two sections are devoted to obtain the sampling results:
\subsection{Case of $\mathbf{N}$ Abelian subgroup: sampling indexed by $\mathbf{N}$}
Having in mind the description of $\mathbf{G}/\mathbf{H}$ in \eqref{G/H}, we write the group $\mathbf{G}$  as $\mathbf{G}=\bigcup_{n=1}^\mathfrak{n}\nu_n^{-1}\mathbf{H}$. In the sequel we fix the way of writing the elements of the group $\mathbf{H}$; this will be important for maintaining the structure of the matrices $\mathbb{R}_{\mathsf{a},\mathsf{b}_k}$ introduced below.  We also need the Abelian character of subgroup $\mathbf{N}$  to get a block symmetry for the matrices $\mathbb{R}_{\mathsf{a},\mathsf{b}_k}$.

\medskip

Fixed $\kappa$ elements $\mathsf{b}_k \in\Hc$, $k=1,2,\dots,\kappa$, not necessarily in $\Ac_\mathsf{a}$, for each $\mathsf{f}\in \Ac_\mathsf{a}$
we define its {\em generalized samples}, indexed by the elements in $\mathbf{N}$, as
\begin{equation}
\label{gsamples}
\Lc_k \mathsf{f} (\nu_n):=\big\langle \mathsf{f},U(\nu_n)\mathsf{b}_k\big \rangle_\Hc\,, \quad \text{$n=1, 2, \dots, \mathfrak{n}$ and $k=1, 2,  \dots,\kappa$}\,.
\end{equation}
 Notice that the expression for the generalized samples (\ref{gsamples}) can be seen as an straightforward generalization of the convolution involving the sampled vector $\mathsf{f}\in \Ac_\mathsf{a}$ and the vectors $\mathsf{b}_k \in \Hc$.

Because in general to recover any $\mathsf{f} \in \Ac_\mathsf{a}$ we need at least $\mathfrak{g}$ samples, if the samples are indexed by elements in $\mathbf{N}$, we will need at least $\mathfrak{n}\kappa\geq \mathfrak{g}=\mathfrak{n}\mathfrak{h}$ samples, i.e., $\kappa\geq \mathfrak{h}$.

\medskip

The main aim of this paper is to recover any $\mathsf{f}\in\Ac_\mathsf{a}$ by means of its generalized samples in \eqref{gsamples} by means of  a sampling formula which takes care of the unitary structure of $\Ac_\mathsf{a}$. 

To this end, we first obtain an alternative expression for $\Lc_k\mathsf{f}(\nu_n)$ with 
$n=1, 2, \dots, \mathfrak{n}$ and $k=1, 2, \dots, \kappa$\,. Namely, introducing the expression of $\mathsf{f}\in \Ac_\mathsf{a}$ in \eqref{gsamples} we get
\begin{equation}
\label{genSamples}
\begin{split}
\Lc_k\mathsf{f}(\nu_n)&=\Big\langle \sum_{g\in \mathbf{G}} \alpha_g \,U(g)\mathsf{a}, U(\nu_n)\mathsf{b}_k \Big\rangle_\Hc 
         =\sum_{g\in \mathbf{G}} \alpha_g \,\big\langle U (g)\mathsf{a},U(\nu_n)\mathsf{b}_k\big\rangle_\Hc \\ 
         &= \big\langle(\alpha_g)_{g\in \mathbf{G}},(\overline{\langle U(g)\mathsf{a},U(\nu_n)\mathsf{b}_k\rangle})_{g\in \mathbf{G}}\big\rangle_{\ell^2(\mathbf{G})} =\big\langle \boldsymbol{\alpha},\mathbf{g}_{k,\nu_n}\big\rangle_{\ell^2(\mathbf{G})}\,,
\end{split}
\end{equation}
where $\boldsymbol{\alpha}=(\alpha_g)_{g\in \mathbf{G}}$ and $\mathbf{g}_{k,\nu_n}=(\overline{\langle U(g)\mathsf{a},U(\nu_n)\mathsf{b}_k\rangle })_{g\in \mathbf{G}}$ belong to $\ell^2(\mathbf{G})$. 
The vectors $\mathbf{g}_{k,\nu_n}\in \ell^2(\mathbf{G})$, $k=1,2,\dots,\kappa$ and $n=1, 2,\dots, \mathfrak{n}$, can be expressed in terms of the cross-covariances 
$r_{\mathsf{a},\mathsf{b}_k}$ as 
\[
  \mathbf{g}_{k,\nu_n}=\Big(\,\overline{\langle U(g)\mathsf{a},U(\nu_n)\mathsf{b}_k \rangle}\,\Big)_{g\in \mathbf{G}}=
         \Big(\,\overline{\langle U(\nu_n^{-1}g)\mathsf{a},\mathsf{b}_k \rangle}\,\Big)_{g\in \mathbf{G}} =
         \Big(\,\overline{r_{\mathsf{a},\mathsf{b}_k}}(\nu_n^{-1}g)\,\Big)_{g\in \mathbf{G}} \,.
\]
Having in mind the expression \eqref{genSamples} for the samples and the isomorphism $\Tc_\mathsf{a}^\mathbf{G}$ defined in \eqref{iso} we deduce the following result (see also the finite frame theory \cite{casazza:14}):
\begin{prop}
\label{prop3}
Any $\mathsf{f}\in \Ac_\mathsf{a}$ can be recovered from its samples $\big\{\Lc_k \mathsf{f} (\nu_n) \big\}_{\substack{k=1,2,\ldots, \kappa \\ n=1,2,\ldots,\mathfrak{n}}}$ if and only if the set of vectors $\big\{\mathbf{g}_{k,\nu_n}\big\}_{\substack{k=1,2,\ldots, \kappa \\ n=1,2,\ldots,\mathfrak{n}}}$ in $\ell^2(\mathbf{G})$ form a spanning set (a frame) for $\ell^2(\mathbf{G})$. 
\end{prop}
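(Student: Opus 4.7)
The proof is a direct translation of the recovery problem through the isomorphism $\Tc^\mathbf{G}_\mathsf{a}$ of \eqref{iso}. Since one works in $\Ac_\mathsf{a}$, I will tacitly assume that $\{U(g)\mathsf{a}\}_{g\in\mathbf{G}}$ is linearly independent, so that Proposition~\ref{Rinv} guarantees $\Tc^\mathbf{G}_\mathsf{a}$ is a bijection between $\ell^2(\mathbf{G})$ and $\Ac_\mathsf{a}$. Consequently, recovering $\mathsf{f}\in \Ac_\mathsf{a}$ from the data $\{\Lc_k\mathsf{f}(\nu_n)\}_{k,n}$ is equivalent to recovering its coefficient vector $\boldsymbol{\alpha}=(\Tc^\mathbf{G}_\mathsf{a})^{-1}\mathsf{f}\in \ell^2(\mathbf{G})$ from these data. (If $\{U(g)\mathsf{a}\}$ is dependent, one would restrict $\Tc^\mathbf{G}_\mathsf{a}$ to a complement of its kernel, but this is not needed here.)

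Next, the alternative expression \eqref{genSamples} identifies the sampling map as the analysis operator of the family $\{\mathbf{g}_{k,\nu_n}\}_{k,n}$ in $\ell^2(\mathbf{G})$; that is, the linear map
\[
\mathcal{S}\colon \ell^2(\mathbf{G})\longrightarrow \CC^{\kappa\mathfrak{n}}\,,\qquad \boldsymbol{\alpha}\longmapsto \big(\langle \boldsymbol{\alpha},\mathbf{g}_{k,\nu_n}\rangle_{\ell^2(\mathbf{G})}\big)_{k,n}\,.
\]
The previous paragraph shows that $\mathsf{f}$ is recoverable from its samples if and only if $\mathcal{S}$ is injective.

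Now apply the standard identity $\ker \mathcal{S}=\big(\espan\{\mathbf{g}_{k,\nu_n}\}_{k,n}\big)^{\perp}$: injectivity of $\mathcal{S}$ is equivalent to $\espan\{\mathbf{g}_{k,\nu_n}\}_{k,n}=\ell^2(\mathbf{G})$, i.e.\ to the stated spanning property. To close the parenthetical remark that in our setting a spanning set \emph{is} a frame, I would invoke the fact that in finite dimensions any finite family has a finite upper frame bound, and injectivity of $\mathcal{S}$ on the compact unit sphere of $\ell^2(\mathbf{G})$ produces a strictly positive lower frame bound.

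There is no real obstacle to this argument; the only thing to be careful about is the translation through $\Tc^\mathbf{G}_\mathsf{a}$, since strictly speaking the statement ``$\mathsf{f}$ is recoverable from its samples'' refers to inverting a map on $\Ac_\mathsf{a}$, while the spanning property lives on $\ell^2(\mathbf{G})$. Once the isomorphism $\Tc^\mathbf{G}_\mathsf{a}$ is in place and \eqref{genSamples} is used, the proposition reduces to the elementary linear-algebra fact that a finite collection of linear functionals on a finite-dimensional Hilbert space separates points if and only if the representing vectors span the space.
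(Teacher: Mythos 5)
Your argument is correct and is exactly the deduction the paper intends: it leaves Proposition~\ref{prop3} unproved beyond the remark that it follows from \eqref{genSamples}, the isomorphism $\Tc^\mathbf{G}_\mathsf{a}$, and finite frame theory, and your write-up supplies precisely those steps (transfer to $\ell^2(\mathbf{G})$, injectivity of the analysis operator, $\ker = \operatorname{span}^{\perp}$, and spanning $=$ frame in finite dimensions). No gaps; your caveat about the linear independence of $\{U(g)\mathsf{a}\}_{g\in\mathbf{G}}$ matches the standing assumption the paper makes when it calls $\Tc^\mathbf{G}_\mathsf{a}$ an isomorphism.
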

Equivalently, the $\mathfrak{g}\times \mathfrak{n} \kappa$ matrix having columns $\mathbf{g}_{k,\nu_n}$, $k=1,2,\dots,\kappa$ and $n=1, 2,\dots, \mathfrak{n}$, i.e.,
\begin{equation}
\label{matrixG}
\Big( \mathbf{g}_{1,\nu_1} \dots \mathbf{g}_{1,\nu_\mathfrak{n}}\,\,  \mathbf{g}_{2,\nu_1}  \dots \mathbf{g}_{2,\nu_\mathfrak{n}} 
       \dots  \mathbf{g}_{\kappa,\nu_1}  \dots \mathbf{g}_{\kappa,\nu_\mathfrak{n}}\Big)
\end{equation} 
has rank $\mathfrak{g}$. Hence, we deduce that $\mathfrak{g}\leq \mathfrak{n}\kappa$, that is, $\kappa \geq \mathfrak{h}$. The vector $\mathbf{g}_{k,\nu_n}$ can be written as 
\[
\mathbf{g}_{k,\nu_n}=\Big(\overline{r_{\mathsf{a},\mathsf{b}_k}}(\nu^{-1}_{n}\nu_1^{-1}\mathbf{H}),
    \overline{r_{\mathsf{a},\mathsf{b}_k}}(\nu^{-1}_{n}\nu_2^{-1}\mathbf{H}),\dots, \overline{r_{\mathsf{a},\mathsf{b}_k}}(\nu^{-1}_{n}\nu_\mathfrak{n}^{-1}\mathbf{H})\Big)^\top
\]
where $\nu_n\in \mathbf{N}$ and $\overline{r_{\mathsf{a},\mathsf{b}_k}}(\nu^{-1}_{n}\nu_r^{-1}\mathbf{H})$ is the row vector
\[
  \overline{r_{\mathsf{a},\mathsf{b}_k}}(\nu^{-1}_{n}\nu^{-1}_r \mathbf{H})=\Big(\overline{r_{\mathsf{a},\mathsf{b}_k}}(\nu^{-1}_{n}\nu_r^{-1}\tau_1),
                        \overline{r_{\mathsf{a},\mathsf{b}_k}}(\nu^{-1}_{n}\nu_r^{-1}\tau_2),\dots,\overline{r_{\mathsf{a},\mathsf{b}_k}}(\nu^{-1}_{n}\nu_r^{-1}\tau_{\mathfrak{h}})\Big)\,,
\]
being $\mathbf{H}=\{\tau_1=1_{\mathbf{G}},\, \tau_2,\dots,\tau_\mathfrak{h}\}$. For each $k=1,2,\dots,\kappa$, let $\mathbb{R}_{\mathsf{a},\mathsf{b}_k}$ be the $\mathfrak{n}\times \mathfrak{g}$ matrix 
\[
  \mathbb{R}_{\mathsf{a},\mathsf{b}_k}=
      \begin{pmatrix}
          r_{\mathsf{a},\mathsf{b}_k}(\nu^{-1}_{1}\nu_1^{-1}\mathbf{H})&
              r_{\mathsf{a},\mathsf{b}_k}(\nu^{-1}_{1}\nu_2^{-1}\mathbf{H})&\dots& 
                                            r_{\mathsf{a},\mathsf{b}_k}(\nu^{-1}_{1}\nu_\mathfrak{n}^{-1}\mathbf{H})\\
        r_{\mathsf{a},\mathsf{b}_k}(\nu^{-1}_{2}\nu_1^{-1}\mathbf{H})&
              r_{\mathsf{a},b_k}(\nu^{-1}_{2}\nu_2^{-1}\mathbf{H})&\dots& 
                                             r_{\mathsf{a},\mathsf{b}_k}(\nu^{-1}_2\nu_\mathfrak{n}^{-1}\mathbf{H})\\
          \vdots & \vdots & \cdots& \vdots\\
         r_{\mathsf{a},\mathsf{b}_k}(\nu^{-1}_\mathfrak{n}\nu_1^{-1}\mathbf{H})&
              r_{\mathsf{a},\mathsf{b}_k}(\nu^{-1}_\mathfrak{n}\nu_2^{-1}\mathbf{H})&\dots& 
                                              r_{\mathsf{a},\mathsf{b}_k}(\nu^{-1}_\mathfrak{n}\nu_\mathfrak{n}^{-1} \mathbf{H}) 
         \end{pmatrix}
\]
Since $\mathbf{N}$ is an Abelian subgroup of $\mathbf{G}$, the left cosets $\nu^{-1}_n\nu_r^{-1}\mathbf{H}$ and 
$\nu^{-1}_r\nu_n^{-1}\mathbf{H}$ coincide. As a consequence, $\mathbb{R}_{\mathsf{a},\mathsf{b}_k}$ is the block symmetric matrix
\begin{equation}
 \label{estructuraRCasoGeneral}
  \mathbb{R}_{\mathsf{a},\mathsf{b}_k}=
      \begin{pmatrix}
          r_{\mathsf{a},\mathsf{b}_k}(\nu^{-1}_1\nu_1^{-1}\mathbf{H})&
              r_{\mathsf{a},\mathsf{b}_k}(\nu_2^{-1}\nu^{-1}_1 \mathbf{H})&\dots& 
                                 r_{\mathsf{a},\mathsf{b}_k}(\nu_\mathfrak{n}^{-1}\nu^{-1}_1 \mathbf{H})\\
        r_{\mathsf{a},\mathsf{b}_k}(\nu_1^{-1}\nu^{-1}_2 \mathbf{H})&
              r_{\mathsf{a},\mathsf{b}_k}(\nu^{-1}_2\nu_2^{-1}\mathbf{H})&\dots& 
                                   r_{\mathsf{a},\mathsf{b}_k}(\nu_\mathfrak{n}^{-1}\nu^{-1}_2 \mathbf{H})\\
          \vdots & \vdots & \cdots& \vdots\\
         r_{\mathsf{a},\mathsf{b}_k}(\nu_1^{-1}\nu^{-1}_\mathfrak{n} \mathbf{H})&
     r_{\mathsf{a},\mathsf{b}_k}(\nu_2^{-1}\nu^{-1}_\mathfrak{n} \mathbf{H})&\dots& r_{\mathsf{a},\mathsf{b}_k}(\nu^{-1}_\mathfrak{n}\nu_\mathfrak{n}^{-1}\mathbf{H}) 
      \end{pmatrix}
\end{equation}
The matrix given in \eqref{matrixG} can be written as $\big(\mathbb{R}_{\mathsf{a},\mathsf{b}_1}^*\,\mathbb{R}_{\mathsf{a},\mathsf{b}_2}^*\,\dots \,\mathbb{R}_{\mathsf{a},\mathsf{b}_\kappa}^*\big)$, where, as usual, the symbol $*$ denotes the traspose conjugate matrix. Thus, Proposition \ref{prop3} can be restated in terms of the 
$\mathfrak{n}\kappa\times \mathfrak{g}$ {\em cross-covariance matrix} $\mathbb{R}_{\mathsf{a},\mathbf{b}}$  defined by
\begin{equation}
\label{matrixcc}
\mathbb{R}_{\mathsf{a},\mathbf{b}}:= \begin{pmatrix}       
\mathbb{R}_{\mathsf{a},\mathsf{b}_1} \\
\mathbb{R}_{\mathsf{a},\mathsf{b}_2}\\  
\vdots \\
\mathbb{R}_{\mathsf{a},\mathsf{b}_\kappa}   \\
\end{pmatrix}
\end{equation}
\begin{cor}
\label{cor1}
Any $\mathsf{f}\in \Ac_\mathsf{a}$ can be recovered from its samples $\big\{\Lc_k \mathsf{f} (\nu_n) \big\}_{\substack{k=1,2,\ldots, \kappa \\ n=1,2,\ldots,\mathfrak{n}}}$ if and only $\rank\, \mathbb{R}_{\mathsf{a},\mathbf{b}}=\mathfrak{g}$. 
\end{cor}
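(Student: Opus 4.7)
The plan is to read the corollary as an immediate translation of Proposition \ref{prop3} into the language of the matrix $\mathbb{R}_{\mathsf{a},\mathbf{b}}$. First I would recall that, by Proposition \ref{prop3}, recovery of any $\mathsf{f}\in\Ac_\mathsf{a}$ from the samples $\{\Lc_k\mathsf{f}(\nu_n)\}_{k,n}$ is equivalent to the family $\{\mathbf{g}_{k,\nu_n}\}_{k,n}\subset \ell^2(\mathbf{G})$ being a spanning set of $\ell^2(\mathbf{G})$. Since $\ell^2(\mathbf{G})\cong \CC^{\mathfrak{g}}$, the spanning property is in turn equivalent to the $\mathfrak{g}\times \mathfrak{n}\kappa$ matrix assembled from these vectors as columns having rank $\mathfrak{g}$.

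Next I would identify that matrix explicitly. By the computation carried out just before the corollary, the matrix with columns $\mathbf{g}_{k,\nu_n}$ (in the prescribed ordering, $k$ varying slowest and $n$ fastest) is precisely $\big(\mathbb{R}_{\mathsf{a},\mathsf{b}_1}^*\;\mathbb{R}_{\mathsf{a},\mathsf{b}_2}^*\;\dots\;\mathbb{R}_{\mathsf{a},\mathsf{b}_\kappa}^*\big)$, which is the conjugate transpose $\mathbb{R}_{\mathsf{a},\mathbf{b}}^*$ of the cross-covariance matrix defined in \eqref{matrixcc}. Hence the spanning condition reads $\rank\,\mathbb{R}_{\mathsf{a},\mathbf{b}}^* = \mathfrak{g}$.

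Finally I would invoke the standard fact that the rank is invariant under conjugate transposition, i.e.\ $\rank\,\mathbb{R}_{\mathsf{a},\mathbf{b}}^* = \rank\,\mathbb{R}_{\mathsf{a},\mathbf{b}}$, to conclude that recovery holds if and only if $\rank\,\mathbb{R}_{\mathsf{a},\mathbf{b}}=\mathfrak{g}$. There is no real obstacle here; the only bookkeeping that needs care is making sure that the ordering of the columns $\mathbf{g}_{k,\nu_n}$ matches the block arrangement used to build $\mathbb{R}_{\mathsf{a},\mathbf{b}}$, so that the identification with $\mathbb{R}_{\mathsf{a},\mathbf{b}}^*$ is literally correct rather than only up to a column permutation (which anyway would not affect the rank).
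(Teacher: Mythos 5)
Your argument is correct and is essentially the paper's own (implicit) proof: the authors likewise note that the matrix \eqref{matrixG} with columns $\mathbf{g}_{k,\nu_n}$ equals $\big(\mathbb{R}_{\mathsf{a},\mathsf{b}_1}^*\,\mathbb{R}_{\mathsf{a},\mathsf{b}_2}^*\,\dots\,\mathbb{R}_{\mathsf{a},\mathsf{b}_\kappa}^*\big)=\mathbb{R}_{\mathsf{a},\mathbf{b}}^*$ and then restate Proposition \ref{prop3} using the invariance of rank under conjugate transposition. Your extra remark about the column ordering (and that a permutation would not affect the rank anyway) is sound bookkeeping and does not change the argument.
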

Besides, Equation \eqref{genSamples} can be expressed, for any $\mathsf{f}=\sum_{g\in \mathbf{G}}\alpha_g \,U(g)\mathsf{a}$ in $\Ac_\mathsf{a}$,  as
\[
   \begin{pmatrix}
     \Lc_k \mathsf{f}(\nu_1)&
     \Lc_k \mathsf{f}(\nu_2)&
          \cdots &
     \Lc_k \mathsf{f}(\nu_\mathfrak{n})
   \end{pmatrix}^\top=\mathbb{R}_{\mathsf{a},\mathsf{b}_k}\,\boldsymbol{\alpha}\,,
\]
where $\boldsymbol{\alpha}=(\alpha_g)_{g\in \mathbf{G}}$. As a consequence, we deduce the expression:
\begin{prop} 
For any $\mathsf{f}=\sum_{g\in \mathbf{G}}\alpha_g \, U(g)\mathsf{a}$  in $\Ac_\mathsf{a}$, consider its samples vector 
\begin{equation}
\Lc_{\operatorname{samp}}\mathsf{f}:=\big(\Lc_1 \mathsf{f}(\nu_1)\dots\Lc_1 \mathsf{f}(\nu_p)\cdots \Lc_\kappa \mathsf{f}(\nu_1)\dots \Lc_\kappa \mathsf{f}(\nu_\mathfrak{n})\big)^\top\,.
\end{equation}
Then, the matrix relationship
\begin{equation}
\label{samplematrix}
\Lc_{\operatorname{samp}}\mathsf{f}=\mathbb{R}_{\mathsf{a},\mathbf{b}}\,\boldsymbol{\alpha}
\end{equation}
holds, where $\boldsymbol{\alpha}=(\alpha_g)_{g\in \mathbf{G}}$ and $\mathbb{R}_{\mathsf{a},\mathbf{b}}$ is the $\mathfrak{n}\kappa\times \mathfrak{g}$ 
cross-covariances matrix defined in \eqref{matrixcc}.
\end{prop}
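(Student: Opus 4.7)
The proof is essentially a direct bookkeeping computation that packages calculation \eqref{genSamples} in matrix form; there is no real obstacle beyond carefully matching the indexing conventions used for $\mathbb{R}_{\mathsf{a},\mathsf{b}_k}$ with the coset decomposition $\mathbf{G}=\bigcup_{r=1}^{\mathfrak{n}}\nu_r^{-1}\mathbf{H}$ that was fixed just before \eqref{gsamples}.

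First, I would fix $k\in\{1,\dots,\kappa\}$ and show the block identity
\[
\bigl(\Lc_k\mathsf{f}(\nu_1),\,\Lc_k\mathsf{f}(\nu_2),\,\dots,\,\Lc_k\mathsf{f}(\nu_{\mathfrak{n}})\bigr)^{\top}=\mathbb{R}_{\mathsf{a},\mathsf{b}_k}\,\boldsymbol{\alpha}.
\]
Starting from \eqref{gsamples}, plugging in $\mathsf{f}=\sum_{g\in \mathbf{G}}\alpha_g\,U(g)\mathsf{a}$, and using the unitarity of $U$, I would rewrite
\[
\Lc_k\mathsf{f}(\nu_n)=\sum_{g\in \mathbf{G}}\alpha_g\,\bigl\langle U(\nu_n^{-1}g)\mathsf{a},\mathsf{b}_k\bigr\rangle_{\Hc}=\sum_{g\in \mathbf{G}}r_{\mathsf{a},\mathsf{b}_k}(\nu_n^{-1}g)\,\alpha_g.
\]

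Second, I would split the sum over $\mathbf{G}$ according to the decomposition $g=\nu_r^{-1}\tau_j$ with $r\in\{1,\dots,\mathfrak{n}\}$ and $\tau_j\in\mathbf{H}=\{\tau_1,\dots,\tau_\mathfrak{h}\}$, yielding
\[
\Lc_k\mathsf{f}(\nu_n)=\sum_{r=1}^{\mathfrak{n}}\sum_{j=1}^{\mathfrak{h}}r_{\mathsf{a},\mathsf{b}_k}\bigl(\nu_n^{-1}\nu_r^{-1}\tau_j\bigr)\,\alpha_{\nu_r^{-1}\tau_j}.
\]
This is exactly the inner sum corresponding to multiplying the $n$-th row of $\mathbb{R}_{\mathsf{a},\mathsf{b}_k}$ in \eqref{estructuraRCasoGeneral}, whose $r$-th row-block is the vector $r_{\mathsf{a},\mathsf{b}_k}(\nu_n^{-1}\nu_r^{-1}\mathbf{H})$, against the $r$-th block of $\boldsymbol{\alpha}$ consisting of the entries $(\alpha_{\nu_r^{-1}\tau_1},\dots,\alpha_{\nu_r^{-1}\tau_\mathfrak{h}})^{\top}$. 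This establishes the per-channel identity.

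Finally, I would stack these $\kappa$ identities vertically. The left-hand side is precisely $\Lc_{\operatorname{samp}}\mathsf{f}$ by its definition, while the right-hand side is $(\mathbb{R}_{\mathsf{a},\mathsf{b}_1}^{\top}\,\mathbb{R}_{\mathsf{a},\mathsf{b}_2}^{\top}\,\cdots\,\mathbb{R}_{\mathsf{a},\mathsf{b}_\kappa}^{\top})^{\top}\boldsymbol{\alpha}=\mathbb{R}_{\mathsf{a},\mathbf{b}}\,\boldsymbol{\alpha}$ by the definition \eqref{matrixcc}, concluding \eqref{samplematrix}. The only care point is verifying that the column ordering of each $\mathbb{R}_{\mathsf{a},\mathsf{b}_k}$ (inherited from the coset labelling in \eqref{G/H} and the fixed enumeration of $\mathbf{H}$) matches the ordering of $\boldsymbol{\alpha}=(\alpha_g)_{g\in \mathbf{G}}$; this is exactly the enumeration convention imposed at the beginning of the subsection, so no further argument is needed.
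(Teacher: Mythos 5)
Your proof is correct and follows essentially the same route as the paper, which simply observes that the per-channel identity $\big(\Lc_k\mathsf{f}(\nu_1),\dots,\Lc_k\mathsf{f}(\nu_\mathfrak{n})\big)^{\top}=\mathbb{R}_{\mathsf{a},\mathsf{b}_k}\,\boldsymbol{\alpha}$ is a restatement of \eqref{genSamples} and then stacks the $\kappa$ blocks; you merely spell out the coset bookkeeping that the paper leaves implicit.
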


Assuming that $\{\mathbf{g}_{k,\nu_n}\}_{\substack{k=1,2,\ldots, \kappa \\ n=1,2,\ldots,\mathfrak{n}}}$ is a frame for
$\ell^2(\mathbf{G})$ we have that the rank of the matrix  $\mathbb{R}_{\mathsf{a},\mathbf{b}}$ is  $\mathfrak{g}$. Let $\mathbb{M}$ be a left-inverse of the matrix 
$\mathbb{R}_{\mathsf{a},\mathbf{b}}$ whose columns are denoted by $\mathbf{m}_{k,\nu_n}$, $k=1,2,\dots,\kappa$ and $n=1, 2,\dots, \mathfrak{n}$, as in matrix \eqref{matrixG}. All these matrices are expressed as (see Ref.\cite{penrose:55}) 
\begin{equation}
\label{todasLI}
    	\mathbb{M}=\mathbb{R}^\dag_{\mathsf{a},\mathbf{b}}+\mathbb{U}[\mathbb{I}_{\mathfrak{n}\kappa}-\mathbb{R}_{\mathsf{a},\mathbf{b}}\mathbb{R}^\dag_{\mathsf{a},\mathbf{b}}]
\end{equation}
where the $\mathfrak{g}\times \mathfrak{n}\kappa$ matrix  
$\mathbb{R}^\dag_{\mathsf{a},\mathbf{b}}=[\mathbb{R}^*_{\mathsf{a},\mathbf{b}}\mathbb{R}_{\mathsf{a},\mathbf{b}}]^{-1}\mathbb{R}^*_{\mathsf{a},\mathbf{b}}$ is  the Moore-Penrose  pseudoinverse of $\mathbb{R}_{\mathsf{a},\mathbf{b}}$ 
(see \cite{penrose:55}), and $\mathbb{U}$ denotes an arbitrary $\mathfrak{g}\times \mathfrak{n}\kappa$ matrix.
From \eqref{samplematrix} we obtain the frame expansion
\begin{equation}
\label{samplingFormulaGen}
\boldsymbol{\alpha}=\mathbb{M}\,\Lc_{\operatorname{samp}}\mathsf{f}=
\sum_{k=1}^\kappa\sum_{n=1}^\mathfrak{n}\Lc_k \mathsf{f}(\nu_n)\mathbf{m}_{k,\nu_n}=
\sum_{k=1}^\kappa \sum_{n=1}^{\mathfrak{n}}\langle \boldsymbol{\alpha},\mathbf{g}_{k,\nu_n}\rangle_{\ell^2(\mathbf{G})}\,
\mathbf{m}_{k,\nu_n}\,.
\end{equation}

\subsection*{The sampling result}
\label{subsection:SamplingResult}
Let $\mathsf{f}=\sum_{g\in \mathbf{G}} \alpha_g \,U(g)\mathsf{a}$ be a vector of $\Ac_\mathsf{a}$;  applying the isomorphism \eqref{iso} in \eqref{samplingFormulaGen} we get
\begin{equation}
\label{samplingResult}
   \mathsf{f}=\Tc^\mathbf{G}_\mathsf{a}(\boldsymbol{\alpha})=
   \sum_{k=1}^\kappa\sum_{n=1}^\mathfrak{n} \Lc_k \mathsf{f}(\nu_n)\,\Tc^\mathbf{G}_\mathsf{a}(\mathbf{m}_{k,\nu_n})
\end{equation}
The columns $\mathbf{m}_{k,\nu_n}$ in the formula above  do not have, in principle,  a suitable structure for applying the shifting property \eqref{shiftgroup}. Although we will see that the columns of the Moore-Penrose pseudo-inverse $\mathbb{R}^\dag_{\mathsf{a},\mathbf{b}}$ fulfil the required attribute, we  will construct in the next section all the left-inverses of $\mathbb{R}_{\mathsf{a},\mathbf{b}}$ allowing it.

\medskip

In order to prove that $\mathbb{R}^\dag_{\mathsf{a},\mathbf{b}}$ has the suitable structure, note that each $\mathfrak{n}\times \mathfrak{g}$ block $\mathbb{R}_{\mathsf{a},\mathsf{b}_k}$ has an $\mathfrak{h}$-circulant character in the sense that each row of $\mathbb{R}_{\mathsf{a},\mathsf{b}_k}$ is the previous row moved to the right $\mathfrak{h}$ places and wrapped around. In general, and in terms of  a matrix $C$  of order 
$\kappa \mathfrak{n}\times \mathfrak{g}$ partitioned into $\kappa$ submatrices of order $\mathfrak{n}\times \mathfrak{g}$, each block has a $\mathfrak{h}$-circulant character if and only if 
$ C P_{\mathfrak{g}}^{\mathfrak{h}}=   \mathbb{P} C$,  or equivalently, 
\[
 C= \mathbb{P}^* C P_{\mathfrak{g}}^{\mathfrak{h}}
 \]
where $P_i$ denotes the $1$-circulant square matrix of order $i\in \mathbb{N}$ with first row $(0, 1, 0, \cdots, 0)$ and $ \mathbb{P}$ is the square matrix of order 
$\kappa \mathfrak{n}$ given by  $\mathbb{P}= \text{diag}(P_\mathfrak{n}, \dots, P_\mathfrak{n})$,  the direct sum of $\kappa$ times the matrix $P_\mathfrak{n}$. The above characterization allows to conclude easily that $(C^\dag)^*$ inherits, and consequently $(C^\dag)^\top$, the $\mathfrak{h}$-circulant character from  $C$. Indeed 
\[
(C^\dag)^*=  ((\mathbb{P}^* C P_{\mathfrak{g}}^{\mathfrak{h}})^\dag)^* =\big((P_{\mathfrak{g}}^{\mathfrak{h}})^* C^{\dag}\mathbb{P}\big)^*=\mathbb{P}^* (C^\dag)^* P_{\mathfrak{g}}^{\mathfrak{h}}\,.
 \]
For more details on pseudoinverses of circulant matrices see Refs.~\cite{pye:73,stallings:72}. In these sources are to be found the  above results although  for a square matrix $C$.
\subsection*{$\mathbf{G}$-compatible left-inverses} 
Now, we proceed to construct a specific left-inverse $\mathbb{M}_{\mathbb{S}}$ of $\mathbb{R}_{\mathsf{a},\mathbf{b}}$ from any left-inverse $\mathbb{M}$ given by \eqref{todasLI} in the following way:
We denote by $\mathbb{S}$ the first $\mathfrak{h}$ rows of $\mathbb{M}$; i.e,
\begin{equation}
\label{SPartialEquation}
\mathbb{S}\mathbb{R}_{\mathsf{a},\mathbf{b}}=\big(\mathbb{I}_{\mathfrak{h}}\,\, \mathbb{O}_{\mathfrak{h}\times (\mathfrak{g}- \mathfrak{h})}\big)\,.
\end{equation}
Having in mind the structure of $\mathbb{R}_{\mathsf{a},\mathbf{b}}$ we write the $\mathfrak{h}\times \mathfrak{n}\kappa$ matrix $\mathbb{S}$ as
\[
\mathbb{S}=\big(\mathbb{S}_1 \, \mathbb{S}_2 \cdots \mathbb{S}_\kappa\big)
\]
where each block $\mathbb{S}_k$ is a $\mathfrak{h}\times \mathfrak{n}$ matrix denoted by $\mathbb{S}_k=\big(S_k^1\, S_k^2\, \dots\, S_k^{\mathfrak{n}}\big)$ where 
$S_k^n\in\CC^{\mathfrak{h}}$ for each  $n=1,2, \dots,\mathfrak{n}$ and $k=1,2,\dots,\kappa$. From \eqref{estructuraRCasoGeneral} and \eqref{SPartialEquation} we have
\begin{align*}
    &\sum_{k=1}^\kappa \sum_{n=1}^{\mathfrak{n}} S_k^n \,r_{\mathsf{a},\mathsf{b}_k}(\nu_1^{-1}\nu_n^{-1}\mathbf{H})=\mathbb{I}_{\mathfrak{h}}\\
    &\sum_{k=1}^\kappa \sum_{n=1}^{\mathfrak{n}} S_k^n \,r_{\mathsf{a},\mathsf{b}_k}(\nu_l^{-1}\nu_n^{-1}\mathbf{H})=\mathbb{O}_{\mathfrak{h}},\quad l=2,\dots,\mathfrak{n}\,,
\end{align*}
or equivalently
\begin{align}
\label{partialEq2Gen}
    &\sum_{k=1}^\kappa \sum_{n=1}^{\mathfrak{n}} S_k^n \,r_{\mathsf{a},\mathsf{b}_k}(\nu_n^{-1}\mathbf{H})=\mathbb{I}_{\mathfrak{h}}\\
\label{partialEq3Gen}
    &\sum_{k=1}^\kappa \sum_{n=1}^{\mathfrak{n}} S_k^n \,r_{\mathsf{a},\mathsf{b}_k}(\nu_n^{-1}\nu_l^{-1}\mathbf{H})=\mathbb{O}_{\mathfrak{h}},\quad l=2,\dots,\mathfrak{n}\,.
\end{align}
Now, we form the $\mathfrak{g}\times \mathfrak{n}\kappa$ matrix 
$\mathbb{M}_{\mathbb{S}}=\big(\widetilde{\mathbb{S}}_1\,\widetilde{\mathbb{S}}_2 \dots\widetilde{\mathbb{S}}_\kappa\big)$; each $\mathfrak{g}\times \mathfrak{n}$ block $\widetilde{\mathbb{S}}_k$, $k=1,2,\dots,\kappa$, is formed from the columns of $\mathbb{S}_k$ in the following manner:
\[
   \widetilde{\mathbb{S}}_k:=\begin{pmatrix}
                          S_k^1 & S_k^2 & \cdots & S_k^\mathfrak{n}\\
                          S_k^{1,2} & S_k^{2,2} & \cdots & S_k^{\mathfrak{n},2}\\
                          \vdots & \vdots &\cdots &\vdots\\
                          S_k^{1,\mathfrak{n}} & S_k^{2,\mathfrak{n}} & \cdots & S_k^{\mathfrak{n},\mathfrak{n}}
                        \end{pmatrix}
\]
where, for $i=2,\dots, \mathfrak{n}$ and $n=1, 2,\dots,\mathfrak{n}$, we set
\begin{equation}
\label{crucial}
S_k^{n,i}:=S_k^m\quad  \text{where \,$\nu_m\in \mathbf{N}$\, is the unique element such that  $\nu_m^{-1}=\nu_n^{-1}\nu_i^{-1}$} 
\end{equation}
\begin{lema}
The above $\mathfrak{g}\times \mathfrak{n}\kappa$ matrix $\mathbb{M}_{\mathbb{S}}$ is a left-inverse of $\mathbb{R}_{\mathsf{a},\mathbf{b}}$, i.e, $\mathbb{M}_{\mathbb{S}}\,\mathbb{R}_{\mathsf{a},\mathbf{b}}=\mathbb{I}_\mathfrak{g}$.
\end{lema}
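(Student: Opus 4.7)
The plan is to verify the identity $\mathbb{M}_{\mathbb{S}}\,\mathbb{R}_{\mathsf{a},\mathbf{b}}=\mathbb{I}_{\mathfrak{g}}$ by computing the product in terms of $\mathfrak{h}\times\mathfrak{h}$ blocks. Since $\mathbb{M}_\mathbb{S}$ is composed column-block-wise of $\widetilde{\mathbb{S}}_1,\ldots,\widetilde{\mathbb{S}}_\kappa$ and $\mathbb{R}_{\mathsf{a},\mathbf{b}}$ is composed row-block-wise of $\mathbb{R}_{\mathsf{a},\mathsf{b}_1},\ldots,\mathbb{R}_{\mathsf{a},\mathsf{b}_\kappa}$, the product is $\sum_{k=1}^\kappa\widetilde{\mathbb{S}}_k\,\mathbb{R}_{\mathsf{a},\mathsf{b}_k}$. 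Viewing $\widetilde{\mathbb{S}}_k$ with $\mathfrak{n}$ row-blocks of height $\mathfrak{h}$ and $\mathbb{R}_{\mathsf{a},\mathsf{b}_k}$ with $\mathfrak{n}$ column-blocks of width $\mathfrak{h}$, the $(i,l)$-block of the product is the $\mathfrak{h}\times\mathfrak{h}$ matrix
\[
\Pi_{i,l}:=\sum_{k=1}^{\kappa}\sum_{n=1}^{\mathfrak{n}} S_k^{n,i}\, r_{\mathsf{a},\mathsf{b}_k}(\nu_n^{-1}\nu_l^{-1}\mathbf{H}),
\]
and the goal is to show $\Pi_{i,l}=\delta_{i,l}\,\mathbb{I}_{\mathfrak{h}}$.

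The next step is to exploit the definition \eqref{crucial} of $S_k^{n,i}=S_k^m$ with $\nu_m^{-1}=\nu_n^{-1}\nu_i^{-1}$, together with the Abelian character of $\mathbf{N}$. Introducing the unique $\nu_p\in\mathbf{N}$ with $\nu_p=\nu_i^{-1}\nu_l$ (so $\nu_p^{-1}=\nu_l^{-1}\nu_i=\nu_i\nu_l^{-1}$), an application of the commutativity in $\mathbf{N}$ gives
\[
\nu_m^{-1}\nu_p^{-1}=\nu_n^{-1}\nu_i^{-1}\nu_l^{-1}\nu_i=\nu_n^{-1}\nu_l^{-1},
\]
so $r_{\mathsf{a},\mathsf{b}_k}(\nu_n^{-1}\nu_l^{-1}\mathbf{H})=r_{\mathsf{a},\mathsf{b}_k}(\nu_m^{-1}\nu_p^{-1}\mathbf{H})$. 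Moreover, for fixed $i$ the correspondence $n\mapsto m$ is a bijection of $\{1,2,\ldots,\mathfrak{n}\}$, so after relabeling the inner sum we arrive at
\[
\Pi_{i,l}=\sum_{k=1}^{\kappa}\sum_{m=1}^{\mathfrak{n}} S_k^{m}\, r_{\mathsf{a},\mathsf{b}_k}(\nu_m^{-1}\nu_p^{-1}\mathbf{H}).
\]

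Finally, the two cases are read off directly from the defining relations \eqref{partialEq2Gen} and \eqref{partialEq3Gen} of $\mathbb{S}$. If $l=i$, then $\nu_p=1_{\mathbf{G}}$, hence $\nu_p^{-1}\mathbf{H}=\mathbf{H}$, and \eqref{partialEq2Gen} gives $\Pi_{i,i}=\mathbb{I}_{\mathfrak{h}}$. If $l\neq i$, then $\nu_p\neq 1_{\mathbf{G}}$ (this is where $l\neq i$ is used: $\nu_i^{-1}\nu_l=1_{\mathbf{G}}$ would force $\nu_i=\nu_l$), so $p\in\{2,\ldots,\mathfrak{n}\}$ and \eqref{partialEq3Gen}, applied with $l$ replaced by $p$, yields $\Pi_{i,l}=\mathbb{O}_{\mathfrak{h}}$. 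Assembling the blocks gives $\mathbb{M}_{\mathbb{S}}\,\mathbb{R}_{\mathsf{a},\mathbf{b}}=\mathbb{I}_{\mathfrak{g}}$, as required.

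The only delicate point is the algebraic book-keeping in the middle step: one must simultaneously change the summation variable from $n$ to $m$ inside the coefficient $S_k^{n,i}$ and match the coset index on the covariance via the same Abelianness of $\mathbf{N}$ that was used in \eqref{estructuraRCasoGeneral}. Once the identity $\nu_n^{-1}\nu_l^{-1}=\nu_m^{-1}\nu_p^{-1}$ is in place, the problem collapses to the already-established relations \eqref{partialEq2Gen}--\eqref{partialEq3Gen} for the seed matrix $\mathbb{S}$.
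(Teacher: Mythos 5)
Your proof is correct and follows essentially the same route as the paper's: compute the $(i,l)$ block of $\sum_k\widetilde{\mathbb{S}}_k\,\mathbb{R}_{\mathsf{a},\mathsf{b}_k}$, use the definition \eqref{crucial} together with the commutativity of $\mathbf{N}$ to re-index the sum over $n$ as a sum over $m$, and reduce to \eqref{partialEq2Gen}--\eqref{partialEq3Gen}. You are merely more explicit than the paper about the bijection $n\mapsto m$ and the element $\nu_p$ with $\nu_p^{-1}=\nu_i\nu_l^{-1}$, which is a welcome clarification but not a different argument.
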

\begin{proof}
From \eqref{partialEq2Gen},  for each $i=2,3,\dots,\mathfrak{n}$, we have
\[
\sum_{k=1}^\kappa \sum_{n=1}^{\mathfrak{n}} S_k^{n,i}\,r_{a,\mathsf{b}_k}(\nu_n^{-1}\nu_i^{-1}\mathbf{H})=
\sum_{k=1}^\kappa \sum_{m=1}^{\mathfrak{n}} S_k^m\,r_{\mathsf{a},\mathsf{b}_k}(\nu_m^{-1}\mathbf{H})= \mathbb{I}_\mathfrak{h}
\]
and, from \eqref{partialEq3Gen}
\[
\sum_{k=1}^\kappa \sum_{n=1}^{\mathfrak{n}} S_k^{n,i}\,r_{\mathsf{a},\mathsf{b}_k}(\nu_n^{-1}\nu_l^{-1}\mathbf{H})=
\sum_{k=1}^\kappa \sum_{m=1}^{\mathfrak{n}} S_k^m\,r_{\mathsf{a},\mathsf{b}_k}\big(\nu_m^{-1}(\nu_i \nu_l^{-1})\mathbf{H}\big)= \mathbb{O}_\mathfrak{h},\quad l\neq i\,,
\]
since $\nu_i \nu_l^{-1}\neq 1_\mathbf{G}$ for  $l\neq i$. As a consequence, we deduce that $\mathbb{M}_{\mathbb{S}}\,\mathbb{R}_{\mathsf{a},\mathbf{b}}=\mathbb{I}_\mathfrak{g}$.
\end{proof}

Denoting the columns of the matrix $\mathbb{M}_{\mathbf{S}}$ as $\widetilde{\mathbf{s}}_{k,n}$,  $k=1,2,\dots,\kappa$ and $n=1,2, \dots,\mathfrak{n}$\,, we have
\begin{equation}
\label{Stilde}
\mathbb{M}_{\mathbf{S}}=\Big(
\widetilde{\mathbf{s}}_{1,1}  \dots \widetilde{\mathbf{s}}_{1,\mathfrak{n}} \,\, \widetilde{\mathbf{s}}_{2,1} \dots \widetilde{\mathbf{s}}_{2,\mathfrak{n}}  \dots  \widetilde{\mathbf{s}}_{\kappa,1}  \dots \widetilde{\mathbf{s}}_{\kappa,\mathfrak{n}}\Big)
\end{equation}
Using the left-inverse $\mathbb{M}_{\mathbb{S}}$ of $\mathbb{R}_{\mathsf{a},\mathbf{b}}$ instead of $\mathbb{M}$ in \eqref{samplingFormulaGen}, for each 
$\mathsf{f}=\sum_{g\in \mathbf{G}} \alpha_g \,U(g)\mathsf{a}$ in $\Ac_\mathsf{a}$ we obtain
\[
\boldsymbol{\alpha}=\mathbb{M}_{\mathbb{S}}\,\Lc_{\operatorname{samp}}=
\sum_{k=1}^\kappa \sum_{n=1}^{\mathfrak{n}}\Lc_k\mathsf{f}(\nu_n)\,\widetilde{\mathbf{s}}_{k,n}\,.
\]
On the other hand, the columns $\widetilde{\mathbf{s}}_{k,n}$, $k=1,2,\dots,\kappa$ and 
$n=1,2,\dots,\mathfrak{n}$, as vectors of $\ell^2(\mathbf{G})$ satisfy, by construction, see \eqref{crucial}, the crucial property
\[
      \widetilde{\mathbf{s}}_{k,n}= L_{\nu_n} \widetilde{\mathbf{s}}_{k,1},\quad k=1,2,\dots,\kappa\,\text{ and }\, n=1, 2,\dots,\mathfrak{n}\,.
\]
Hence, the shifting property \eqref{shiftgroup} gives
\[
\begin{split}
\mathsf{f}&=\sum_{k=1}^\kappa\sum_{n=1}^{\mathfrak{n}}\Lc_k\mathsf{f}(\nu_n)\,\Tc^\mathbf{G}_{\mathsf{a}}(\widetilde{\mathbf{s}}_{k,n})
  =\sum_{k=1}^\kappa\sum_{n=1}^{\mathfrak{n}}\Lc_k\mathsf{f}(\nu_n)\,\Tc^\mathbf{G}_{\mathsf{a}}(L_{\nu_n}
                                                   \widetilde{\mathbf{s}}_{k,1})\\
  &=\sum_{k=1}^\kappa\sum_{n=1}^{\mathfrak{n}}\Lc_k\mathsf{f}(\nu_n)\,U(\nu_n)\, \widetilde{\mathbf{s}}_{k,1}\,.
\end{split}
\]
Therefore, we have proved that, for any $\mathsf{f}\in \Ac_\mathsf{a}$ the sampling expansion
\[
   \mathsf{f}=\sum_{k=1}^\kappa\sum_{n=1}^{\mathfrak{n}}\Lc_k\mathsf{f}(\nu_n)\,U(\nu_n)\,\mathsf{c}_k
\]
holds, where $\mathsf{c}_k=\Tc^\mathbf{G}_{\mathsf{a}}(\widetilde{\mathbf{s}}_{k,1})\in \Ac_\mathsf{a}$, $k=1,2,\dots,\kappa$. In fact, collecting all the pieces we have obtained until now we can state the following result: 
\begin{teo}
\label{teoGeneral}
Consider the $\mathfrak{n}\kappa \times \mathfrak{g}$ matrix  $\mathbb{R}_{\mathsf{a},\mathbf{b}}$ defined in 
\eqref{matrixcc}. The following statements are equivalent:
\begin{enumerate}
\item $\rank \mathbb{R}_{\mathsf{a},\mathbf{b}}=\mathfrak{g}$
\item There exists a $\mathfrak{h} \times \mathfrak{n}\kappa$ matrix $\mathbb{S}$ such that 
    \[
      \mathbb{S}\,\mathbb{R}_{\mathsf{a},\mathbf{b}}=\big(\mathbb{I}_{\mathfrak{h}}\, \mathbb{O}_{\mathfrak{h}\times (\mathfrak{g}-\mathfrak{h})}\big)
    \]
\item There exist  vectors $\mathsf{c}_k\in\Ac_\mathsf{a}$, $k=1,2,\dots,\kappa$, such that  
$\big\{U(\nu_n)\mathsf{c}_k\big\}_{\substack{k=1,2,\ldots, \kappa \\ n=1, 2, \ldots,\mathfrak{n}}}$ is a frame (spanning set) for  $\Ac_\mathsf{a}$, and for any 
$\mathsf{f}\in\Ac_\mathsf{a}$ we have the expansion 
\[
\mathsf{f}=\sum_{k=1}^\kappa \sum_{n=1}^{\mathfrak{n}}\Lc_k\mathsf{f}(\nu_n)\,U(\nu_n)\mathsf{c}_k\,.
\]
\item There exists a frame $\{\mathsf{C}_{k,n}\}_{\substack{k=1,2,\ldots, \kappa \\ n=1,2,\ldots,\mathfrak{n}}}$ for $\Ac_\mathsf{a}$ such that, for each 
$\mathsf{f}\in \Ac_\mathsf{a}$ we have the expansion
\[
\mathsf{f}=\sum_{k=1}^\kappa \sum_{n=1}^{\mathfrak{n}}\Lc_k\mathsf{f}(\nu_n)\, \mathsf{C}_{k,n}\,.
\]
\end{enumerate}
\end{teo}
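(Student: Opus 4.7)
My plan is to establish the theorem as a cycle of implications $(1)\Rightarrow(2)\Rightarrow(3)\Rightarrow(4)\Rightarrow(1)$, since most of the machinery has already been assembled in the preceding subsection. The nontrivial content sits in $(2)\Rightarrow(3)$, where we must promote an $\mathfrak{h}\times\mathfrak{n}\kappa$ partial left-inverse into a full $\mathbf{G}$-compatible left-inverse.

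For $(1)\Rightarrow(2)$ I would invoke that a matrix of full column rank admits a left-inverse (for instance the Moore-Penrose pseudoinverse $\mathbb{R}^\dag_{\mathsf{a},\mathbf{b}}$), and take $\mathbb{S}$ to be its first $\mathfrak{h}$ rows; then $\mathbb{S}\mathbb{R}_{\mathsf{a},\mathbf{b}}$ is the first $\mathfrak{h}$ rows of $\mathbb{I}_{\mathfrak{g}}$, which is exactly $(\mathbb{I}_{\mathfrak{h}}\,\mathbb{O}_{\mathfrak{h}\times(\mathfrak{g}-\mathfrak{h})})$. The implication $(3)\Rightarrow(4)$ is immediate by setting $\mathsf{C}_{k,n}:=U(\nu_n)\mathsf{c}_k$. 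For $(4)\Rightarrow(1)$ I would observe that the frame expansion in (4) recovers every $\mathsf{f}\in\Ac_\mathsf{a}$ from its generalized samples $\{\Lc_k\mathsf{f}(\nu_n)\}$; by Corollary \ref{cor1}, this recovery is equivalent to $\rank\mathbb{R}_{\mathsf{a},\mathbf{b}}=\mathfrak{g}$.

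The substantive step is $(2)\Rightarrow(3)$, and here I would essentially harvest the construction performed in the preceding pages. Starting from a matrix $\mathbb{S}$ as in (2), I would partition it as $\mathbb{S}=(\mathbb{S}_1\,\mathbb{S}_2\cdots\mathbb{S}_\kappa)$ with $\mathbb{S}_k=(S_k^1\,S_k^2\,\ldots\,S_k^\mathfrak{n})$ and then build the $\mathfrak{g}\times\mathfrak{n}\kappa$ matrix $\mathbb{M}_{\mathbb{S}}$ using the block replication rule \eqref{crucial}: the $i$-th block-row entry $S_k^{n,i}$ is the column $S_k^m$ indexed by the unique $\nu_m\in\mathbf{N}$ with $\nu_m^{-1}=\nu_n^{-1}\nu_i^{-1}$. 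The already-proved Lemma guarantees $\mathbb{M}_{\mathbb{S}}\mathbb{R}_{\mathsf{a},\mathbf{b}}=\mathbb{I}_\mathfrak{g}$, so the frame expansion \eqref{samplingFormulaGen} applies with $\mathbb{M}=\mathbb{M}_{\mathbb{S}}$. By construction the columns satisfy $\widetilde{\mathbf{s}}_{k,n}=L_{\nu_n}\widetilde{\mathbf{s}}_{k,1}$, so upon applying $\Tc^{\mathbf{G}}_{\mathsf{a}}$ and using the shifting property \eqref{shiftgroup} the expansion collapses to $\mathsf{f}=\sum_{k,n}\Lc_k\mathsf{f}(\nu_n)U(\nu_n)\mathsf{c}_k$ with $\mathsf{c}_k:=\Tc^{\mathbf{G}}_{\mathsf{a}}(\widetilde{\mathbf{s}}_{k,1})\in\Ac_\mathsf{a}$. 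Since every $\mathsf{f}\in\Ac_\mathsf{a}$ admits such an expansion, $\{U(\nu_n)\mathsf{c}_k\}$ spans the finite-dimensional $\Ac_\mathsf{a}$, hence is a frame.

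The main obstacle — really the only delicate point — is verifying that the replication rule \eqref{crucial} is well-defined and produces a genuine left-inverse. Well-definedness uses the Abelian character of $\mathbf{N}$ to ensure that $\nu_n^{-1}\nu_i^{-1}\in\mathbf{N}$ so that $\nu_m$ exists (and the block-symmetry \eqref{estructuraRCasoGeneral} of $\mathbb{R}_{\mathsf{a},\mathsf{b}_k}$ is in place); verifying that the resulting matrix really inverts $\mathbb{R}_{\mathsf{a},\mathbf{b}}$ is exactly what the Lemma establishes, by reindexing the sums in \eqref{partialEq2Gen}--\eqref{partialEq3Gen} via the bijection $n\mapsto m$. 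Once these two facts are in hand, the rest of the equivalence is bookkeeping.
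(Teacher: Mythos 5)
Your proposal is correct and follows essentially the same route as the paper: the same cycle $(1)\Rightarrow(2)\Rightarrow(3)\Rightarrow(4)\Rightarrow(1)$, with $(1)\Rightarrow(2)$ and $(2)\Rightarrow(3)$ delegated to the construction of $\mathbb{M}_{\mathbb{S}}$ and the accompanying Lemma from the preceding subsection, $(3)\Rightarrow(4)$ by taking $\mathsf{C}_{k,n}=U(\nu_n)\mathsf{c}_k$, and $(4)\Rightarrow(1)$ via Corollary \ref{cor1}. The only quibble is that membership $\nu_n^{-1}\nu_i^{-1}\in\mathbf{N}$ follows from $\mathbf{N}$ being a subgroup, not from its being Abelian; commutativity is what gives the block symmetry \eqref{estructuraRCasoGeneral}, as you also note.
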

\begin{proof}
That condition $(1)$ implies condition $(2)$ and condition $(2)$ implies condition $(3)$ have been proved above. Obviously, condition $(3)$ implies condition $(4)$: take $\mathsf{C}_{k,n}=U(\nu_n)\mathsf{c}_k$ for 
$k=1, 2, \dots, \kappa$ and $n=1, 2, \dots, \mathfrak{n}$. Finally, as a consequence of Corollary \ref{cor1}, condition $(4)$ implies condition $(1)$.
\end{proof} 

For the particular case where $\kappa=\mathfrak{h}$ we obtain:
\begin{cor}
Assume that $\kappa=\mathfrak{h}$ and consider the $\mathfrak{g}\times \mathfrak{g}$ cross-covariance matrix $\mathbb{R}_{\mathsf{a},\mathbf{b}}$ defined in \eqref{matrixcc}. The following statements are equivalent:
\begin{enumerate}[(i)]
\item The  matrix $\mathbb{R}_{\mathsf{a},\mathbf{b}}$ is invertible.
\item There exist $\mathfrak{h}$ unique elements $\mathsf{c}_k\in \Ac_\mathsf{a}$, $k=1, 2, \dots, \mathfrak{h}$, such that the sequence $\big\{U(\nu_n) \mathsf{c}_k\big\}_{\substack{k=1,2,\ldots, \mathfrak{h} \\ n=1,2,\ldots,\mathfrak{n}}}$ is a basis for $\Ac_\mathsf{a}$, and  the expansion of any $\mathsf{f}\in \Ac_\mathsf{a}$ with respect to this basis is
\[
\mathsf{f}=\sum_{k=1}^{\mathfrak{h}} \sum_{n=1}^{\mathfrak{n}} \Lc_k \mathsf{f}(\nu_n)\, U(\nu_n) \mathsf{c}_k\,.
\]
\end{enumerate}
In case the equivalent conditions are satisfied, the interpolation property $\Lc_{k}\mathsf{c}_{k'}(\nu_n)=\delta_{k,k'}\,\delta_{n,1}$, \, $n=1,2, \dots ,\mathfrak{n}$ and  $k,k'=1,2, \dots, \mathfrak{h}$, holds.
\end{cor}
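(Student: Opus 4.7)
The plan is to specialize Theorem~\ref{teoGeneral} to the square case $\kappa=\mathfrak{h}$, in which $\mathbb{R}_{\mathsf{a},\mathbf{b}}$ is a $\mathfrak{g}\times\mathfrak{g}$ matrix whose full rank is equivalent to invertibility. I would first verify that when $\mathbb{R}_{\mathsf{a},\mathbf{b}}$ is invertible the auto-covariance $\mathbb{R}_{\mathsf{a}}$ is invertible as well: any $\boldsymbol{\alpha}\in\ker\Tc^{\mathbf{G}}_{\mathsf{a}}$ satisfies $\mathbb{R}_{\mathsf{a},\mathbf{b}}\,\boldsymbol{\alpha}=\Lc_{\operatorname{samp}}\bigl(\Tc^{\mathbf{G}}_{\mathsf{a}}(\boldsymbol{\alpha})\bigr)=0$ by \eqref{samplematrix}, forcing $\boldsymbol{\alpha}=0$. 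By Proposition~\ref{Rinv}, $\{U(g)\mathsf{a}\}_{g\in\mathbf{G}}$ is then linearly independent, and consequently $\dim\Ac_{\mathsf{a}}=\mathfrak{g}=\mathfrak{n}\mathfrak{h}=\mathfrak{n}\kappa$.

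With this dimension count in hand, the implication (i)$\Rightarrow$(ii) follows at once from Theorem~\ref{teoGeneral}(1)$\Rightarrow$(3): the produced spanning set $\{U(\nu_n)\mathsf{c}_k\}$ has cardinality $\mathfrak{n}\kappa=\dim\Ac_{\mathsf{a}}$, hence it is a basis. Uniqueness of the vectors $\mathsf{c}_k$ comes from the observation that, for a square invertible matrix, the parametric family of left-inverses in \eqref{todasLI} collapses to the single matrix $\mathbb{R}^{-1}_{\mathsf{a},\mathbf{b}}$, so the first $\mathfrak{h}$ rows $\mathbb{S}$ are uniquely determined; by the construction leading to \eqref{Stilde}, so are the columns $\widetilde{\mathbf{s}}_{k,1}$ and the vectors $\mathsf{c}_k=\Tc^{\mathbf{G}}_{\mathsf{a}}(\widetilde{\mathbf{s}}_{k,1})$. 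The converse (ii)$\Rightarrow$(i) is a direct application of Theorem~\ref{teoGeneral}(3)$\Rightarrow$(1): the expansion forces $\rank\mathbb{R}_{\mathsf{a},\mathbf{b}}=\mathfrak{g}$, i.e., invertibility of the square matrix.

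Finally, the interpolation property is obtained by plugging $\mathsf{f}=\mathsf{c}_{k'}$ into the expansion of (ii). Since $\nu_1=1_{\mathbf{G}}$, this reads $\mathsf{c}_{k'}=\sum_{k,n}\Lc_k\mathsf{c}_{k'}(\nu_n)\,U(\nu_n)\mathsf{c}_k$, and the uniqueness of coordinates with respect to the basis $\{U(\nu_n)\mathsf{c}_k\}$ forces $\Lc_k\mathsf{c}_{k'}(\nu_n)=\delta_{k,k'}\,\delta_{n,1}$. I do not foresee any genuine obstacle: the square, invertible case is the cleanest specialization of the general theorem, where the frame becomes a basis and the Moore--Penrose pseudo-inverse becomes an actual inverse; everything else has been done in Theorem~\ref{teoGeneral}.
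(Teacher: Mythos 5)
Your proposal is correct and follows essentially the same route as the paper: in the square case the left-inverse of $\mathbb{R}_{\mathsf{a},\mathbf{b}}$ is unique, hence equals $\mathbb{R}_{\mathsf{a},\mathbf{b}}^{-1}$ and necessarily has the $\mathbb{M}_{\mathbb{S}}$ structure, which upgrades the frame of Theorem~\ref{teoGeneral} to a basis, and the interpolation property follows from uniqueness of coordinates with respect to that basis. Your added check that invertibility of $\mathbb{R}_{\mathsf{a},\mathbf{b}}$ forces $\dim\Ac_{\mathsf{a}}=\mathfrak{g}$ is a useful detail the paper leaves implicit, but it does not change the argument.
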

\begin{proof}
Notice that the inverse matrix $\mathbb{R}_{\mathsf{a},\mathbf{b}}^{-1}$ has necessarily the structure of the matrix $\mathbb{M}_{\mathbb{S}}$ in \eqref{Stilde}. The uniqueness of the expansion with respect to a basis gives the interpolation property.
\end{proof}

\subsection{Case of $\mathbf{H}$ Abelian subgroup: sampling indexed by $\mathbf{H}$}
In case $\mathbf{G}=\mathbf{N}\bowtie \mathbf{H}$, there is just one element of $\mathbf{H}$ in each left coset of $\mathbf{G}/\mathbf{N}$. Indeed, suppose that there exists $\tau,\,\tau^*\in \mathbf{H}$ such that $\tau=\tau^*\nu$ for some $\nu \in \mathbf{N}$. Then, 
 \[
     \tau=\tau^* \nu=\alpha_{\tau^*}(\nu)\beta_{\nu}(\tau^*),
 \]
and,  as a consequence,  $\alpha_{\tau^*}(\nu)=1_{\mathbf{G}}$, which implies $\nu=1_{\mathbf{G}}$, since
 $\alpha_{\tau^*}\in Aut(\mathbf{N})$, and $\tau=\tau^*$. Hence, we can choose an element of $\mathbf{H}$ in each left coset of the quotient set $\mathbf{G}/\mathbf{N}$. As $\mathfrak{h}=|\mathbf{H}|=|\mathbf{G}/\mathbf{N}|$, in case $\mathbf{H}=\{\tau_1=1_{\mathbf{G}},\, \tau_2,\dots,\tau_\mathfrak{h}\}$ we can describe the quotient set $\mathbf{G}/\mathbf{N}$ as
\begin{equation}
\label{G/N}
\mathbf{G}/\mathbf{N}=\big\{[1_{\mathbf{G}}=\tau_1], [\tau_2], \cdots, [\tau_\mathfrak{h}] \big\}\,.
\end{equation}
Hence, for a fixed order of the elements in $\mathbf{N}$ we  can write $\mathbf{G}=\bigcup_{n=1}^\mathfrak{h} \tau_n^{-1}\mathbf{N}$.  
Therefore, in case the subgroup $\mathbf{H}$ is Abelian the above partition of  $\mathbf{G}$ allow us to proceed as in the previous section in order to obtain a sampling formula for $\Ac_\mathsf{a}$ by indexing the data samples in $\mathbf{H}$. 

Indeed, for fixed $\kappa$ elements $\mathsf{b}_k\in\Hc$, $k=1,2,\dots,\kappa$, for each $\mathsf{f}=\sum_{g\in \mathbf{G}}\alpha_g \,U(g)\mathsf{a}$ in 
$\Ac_\mathsf{a}$ we define again its generalized samples, now indexed in $\mathbf{H}$, by
\begin{equation}
\label{gsamplesH}
\Lc_k \mathsf{f} (\tau_n):=\big\langle \mathsf{f},U(\tau_n)\mathsf{b}_k\big \rangle_\Hc\,, \quad \text{$n=1, 2, \dots, \mathfrak{h}$ and $k=1,2, \dots,\kappa$}\,.
\end{equation}
Roughly speaking, to recover any $\mathsf{f} \in \Ac_\mathsf{a}$ we need at least $\mathfrak{g}$ samples; if we are sampling at $\mathbf{H}$, we will need at least  
$\kappa \mathfrak{h} \geq \mathfrak{g}=\mathfrak{n} \mathfrak{h}$ samples, i.e., $\kappa \geq \mathfrak{n}$.

Let $\mathsf{f}$ be in $\Ac_\mathsf{a}$, in this case the expression for $\Lc_k\mathsf{f}(\tau_n)$ with $n=1, 2, \dots, \mathfrak{h}$, similar to \eqref{genSamples}, is
\begin{equation}
\label{genSamplesH}
\Lc_k\mathsf{f}(\tau_n)=\big\langle \boldsymbol{\alpha},\mathbf{g}_{k,\tau_n}\big\rangle_{\ell^2(\mathbf{G})}\,,
\end{equation}
where $\boldsymbol{\alpha}=(\alpha_g)_{g\in \mathbf{G}}$ and 
$\mathbf{g}_{k,\tau_n}=\big(\,\overline{\langle U(g)\mathsf{a},U(\tau_n)\mathsf{b}_k\rangle }\,\big)_{g\in \mathbf{G}}$ 
belong to $\ell^2(\mathbf{G})$. The vectors $\mathbf{g}_{k,\tau_n}\in \ell^2(\mathbf{G})$, 
$k=1,2,\dots,\kappa$, $n=1, 2, \dots, \mathfrak{h}$, can be expressed in terms of the cross-covariance $r_{\mathsf{a},\mathsf{b}_k}$ as 
$\mathbf{g}_{k,\tau_n}=\big(\,\overline{r_{\mathsf{a},\mathsf{b}_k}}(\tau_n^{-1}g)\,\big)_{g\in \mathbf{G}}$.
Now,  proceeding as before, since $\mathbf{H}$ is abelian, for each $k=1,2,\dots,\kappa$, we get  the $\mathfrak{h}\times \mathfrak{g}$ matrix $\mathbb{R}_{\mathsf{a},\mathsf{b}_k}$ 
\begin{equation}
 \label{estructuraRCasoGeneralH}
  \mathbb{R}_{\mathsf{a},\mathsf{b}_k}=
 \begin{pmatrix}
          r_{\mathsf{a},\mathsf{b}_k}(\tau^{-1}_{1}\tau_1^{-1}\mathbf{N})&
              r_{\mathsf{a},\mathsf{b}_k}(\tau^{-1}_{2}\tau_1^{-1}\mathbf{N})&\dots& 
                              r_{\mathsf{a},\mathsf{b}_k}(\tau^{-1}_{\mathfrak{h}}\tau_1^{-1}\mathbf{N})\\
        r_{\mathsf{a},\mathsf{b}_k}(\tau^{-1}_{1}\tau_2^{-1}\mathbf{N})&
              r_{\mathsf{a},\mathsf{b}_k}(\tau^{-1}_{2}\tau_2^{-1}\mathbf{N})&\dots& 
                                r_{\mathsf{a},\mathsf{b}_k}(\tau^{-1}_{\mathfrak{h}}\tau_2^{-1}\mathbf{N})\\
          \vdots & \vdots & \cdots& \vdots\\
         r_{\mathsf{a},\mathsf{b}_k}(\tau^{-1}_{1}\tau_\mathfrak{h}^{-1}\mathbf{N})&
              r_{\mathsf{a},\mathsf{b}_k}(\tau^{-1}_{2}\tau_\mathfrak{h}^{-1}\mathbf{N})&\dots& 
                                 r_{\mathsf{a},\mathsf{b}_k}(\tau^{-1}_{\mathfrak{h}}\tau_\mathfrak{h}^{-1}\mathbf{N}) 
         \end{pmatrix}
\end{equation}
where $\tau_n\in \mathbf{H}$ and $\overline{r_{\mathsf{a},\mathsf{b}_k}}(\tau^{-1}_{n}\tau_m^{-1}\mathbf{N})$ is the row vector
\[
  \overline{r_{\mathsf{a},\mathsf{b}_k}}(\tau^{-1}_{n}\tau^{-1}_m \mathbf{N})=
                  \Big(\overline{r_{\mathsf{a},\mathsf{b}_k}}(\tau^{-1}_{n}\tau^{-1}_m\nu_1),
                        \overline{r_{\mathsf{a},\mathsf{b}_k}}(\tau^{-1}_{n}\tau^{-1}_m\nu_2),\dots,
                          \overline{r_{\mathsf{a},\mathsf{b}_k}}(\tau^{-1}_{n}\tau^{-1}_m\nu_{\mathfrak{n}})\Big)\,,
\]
where $\mathbf{N}=\{\nu_1=1_{\mathbf{G}},\, \nu_2,\dots,\nu_\mathfrak{n}\}$.
From matrices $ \mathbb{R}_{\mathsf{a},\mathsf{b}_k}$ in \eqref{estructuraRCasoGeneralH}, $k=1,2,\dots,\kappa$,  we form now the new  $\mathfrak{h}\kappa \times \mathfrak{g}$ cross-covariance matrix $\mathbb{R}_{\mathsf{a},\mathbf{b}}$ as in \eqref{matrixcc}:
\begin{equation}
\label{matrixccH}
\mathbb{R}_{\mathsf{a},\mathbf{b}}:= \begin{pmatrix}       
\mathbb{R}_{\mathsf{a},\mathsf{b}_1} \\
\mathbb{R}_{\mathsf{a},\mathsf{b}_2}\\  
\vdots \\
\mathbb{R}_{\mathsf{a},\mathsf{b}_\kappa}   \\
\end{pmatrix}
\end{equation}
From here, the sampling theory goes in the same manner as in section above. In fact we have the following result, completely analogous to Theorem \ref{teoGeneral}:
\begin{teo}
\label{teoGeneralH}
Consider the $\mathfrak{h}\kappa \times \mathfrak{g}$ matrix $\mathbb{R}_{\mathsf{a},\mathbf{b}}$ defined in \eqref{matrixccH}. The following statements are equivalent:
\begin{enumerate}
\item $\rank \mathbb{R}_{\mathsf{a},\mathbf{b}}=\mathfrak{g}$
\item There exists a $\mathfrak{n} \times \mathfrak{h}\kappa $ matrix $\mathbb{S}$ such that 
    \[
      \mathbb{S}\,\mathbb{R}_{\mathsf{a},\mathbf{b}}=\big(\mathbb{I}_{\mathfrak{n}}\, \mathbb{O}_{\mathfrak{n}\times (\mathfrak{g}-\mathfrak{n})}\big)
    \]
\item There exist  vectors $\mathsf{d}_k\in\Ac_\mathsf{a}$, $k=1,2,\dots,\kappa$, such that  
$\big\{U(\tau_n)\mathsf{d}_k\big\}_{\substack{k=1,2,\ldots, \kappa \\ n=1,2,\ldots,\mathfrak{h}}}$ is a frame for  $\Ac_\mathsf{a}$, and for any 
$\mathsf{f}\in\Ac_\mathsf{a}$ we have the expansion 
\[
\mathsf{f}=\sum_{k=1}^\kappa\sum_{n=1}^{\mathfrak{h}}\Lc_k\mathsf{f}(\tau_n)\,U(\tau_n)\mathsf{d}_k\,.
\]
\item There exists a frame $\{\mathsf{D}_{k,n}\}_{\substack{k=1,2,\ldots, \kappa \\ n=1,2,\ldots,\mathfrak{h}}}$ for $\Ac_\mathsf{a}$ such that, for each $\mathsf{f}\in \Ac_\mathsf{a}$ we have the expansion
\[
\mathsf{f}=\sum_{k=1}^\kappa\sum_{n=1}^{\mathfrak{h}}\Lc_k\mathsf{f}(\tau_n)\, \mathsf{D}_{k,n}\,.
\]
\end{enumerate}
\end{teo}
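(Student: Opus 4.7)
The plan is to mirror the chain of implications $(1)\Rightarrow(2)\Rightarrow(3)\Rightarrow(4)\Rightarrow(1)$ used in Theorem \ref{teoGeneral}, exchanging the roles played by $\mathbf{N}$ and $\mathbf{H}$ and relying on the partition $\mathbf{G}=\bigcup_{n=1}^{\mathfrak{h}}\tau_n^{-1}\mathbf{N}$ coming from \eqref{G/N}. First I would record the analogue of Corollary \ref{cor1}: since \eqref{genSamplesH} yields $\Lc_k\mathsf{f}(\tau_n)=\langle\boldsymbol{\alpha},\mathbf{g}_{k,\tau_n}\rangle_{\ell^2(\mathbf{G})}$ with $\mathbf{g}_{k,\tau_n}$ forming the columns of $\mathbb{R}_{\mathsf{a},\mathbf{b}}^*$, the samples determine $\boldsymbol{\alpha}$ (equivalently $\mathsf{f}$) if and only if these columns span $\ell^2(\mathbf{G})$, i.e.\ $\rank\mathbb{R}_{\mathsf{a},\mathbf{b}}=\mathfrak{g}$. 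This takes care of $(4)\Rightarrow(1)$, and also shows that $(1)$ guarantees the existence of left-inverses; taking the first $\mathfrak{n}$ rows of any such left-inverse produces the matrix $\mathbb{S}$ of $(2)$.

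The implication $(2)\Rightarrow(3)$ is the substantive step and mimics the construction of $\mathbb{M}_{\mathbb{S}}$ preceding \eqref{Stilde}, with $\mathfrak{h}$ replaced by $\mathfrak{n}$. Writing $\mathbb{S}=(\mathbb{S}_1\,\mathbb{S}_2\cdots\mathbb{S}_\kappa)$ with each $\mathbb{S}_k=(S_k^1\,S_k^2\cdots S_k^{\mathfrak{h}})$ a block of size $\mathfrak{n}\times\mathfrak{h}$, the identity $\mathbb{S}\mathbb{R}_{\mathsf{a},\mathbf{b}}=(\mathbb{I}_{\mathfrak{n}}\,\mathbb{O})$ unpacks, using the block structure \eqref{estructuraRCasoGeneralH}, into
\[
\sum_{k=1}^{\kappa}\sum_{n=1}^{\mathfrak{h}} S_k^n\,r_{\mathsf{a},\mathsf{b}_k}(\tau_n^{-1}\mathbf{N})=\mathbb{I}_{\mathfrak{n}},\qquad \sum_{k=1}^{\kappa}\sum_{n=1}^{\mathfrak{h}} S_k^n\,r_{\mathsf{a},\mathsf{b}_k}(\tau_n^{-1}\tau_l^{-1}\mathbf{N})=\mathbb{O}_{\mathfrak{n}},\ \ l=2,\dots,\mathfrak{h}.
\]
I would then form the $\mathfrak{g}\times\mathfrak{h}\kappa$ matrix $\mathbb{M}_{\mathbb{S}}=(\widetilde{\mathbb{S}}_1\,\widetilde{\mathbb{S}}_2\cdots\widetilde{\mathbb{S}}_\kappa)$ whose $k$-th block stacks, in its $i$-th row-group, the columns $S_k^{n,i}:=S_k^m$, where $\tau_m\in\mathbf{H}$ is the unique element with $\tau_m^{-1}=\tau_n^{-1}\tau_i^{-1}$. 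Existence and uniqueness of such $\tau_m$ is exactly where $\mathbf{H}$ being Abelian enters, since $\tau_n^{-1}\tau_i^{-1}=\tau_i^{-1}\tau_n^{-1}\in\mathbf{H}$ and $\mathbf{H}$ is a group.

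A short computation identical to the one in the lemma preceding \eqref{Stilde} then shows $\mathbb{M}_{\mathbb{S}}\mathbb{R}_{\mathsf{a},\mathbf{b}}=\mathbb{I}_{\mathfrak{g}}$: for the diagonal blocks the substitution $m\leftrightarrow(n,i)$ reduces the sum to the first identity displayed above; for off-diagonal blocks $l\neq i$ one gets $\sum_{k,m} S_k^m\,r_{\mathsf{a},\mathsf{b}_k}(\tau_m^{-1}(\tau_i\tau_l^{-1})\mathbf{N})=\mathbb{O}$, which vanishes by the second identity since $\tau_i\tau_l^{-1}\neq 1_{\mathbf{G}}$. Denoting the columns of $\mathbb{M}_{\mathbb{S}}$ by $\widetilde{\mathbf{t}}_{k,n}$, the construction \eqref{crucial} translated to $\mathbf{H}$ forces $\widetilde{\mathbf{t}}_{k,n}=L_{\tau_n}\widetilde{\mathbf{t}}_{k,1}$, and the shifting property \eqref{shiftgroup} of $\Tc_\mathsf{a}^{\mathbf{G}}$ converts the expansion $\boldsymbol{\alpha}=\mathbb{M}_{\mathbb{S}}\Lc_{\operatorname{samp}}\mathsf{f}$ into the desired
\[
\mathsf{f}=\sum_{k=1}^{\kappa}\sum_{n=1}^{\mathfrak{h}}\Lc_k\mathsf{f}(\tau_n)\,U(\tau_n)\mathsf{d}_k,\qquad \mathsf{d}_k:=\Tc_\mathsf{a}^{\mathbf{G}}(\widetilde{\mathbf{t}}_{k,1}).
\]
Finally $(3)\Rightarrow(4)$ is immediate by setting $\mathsf{D}_{k,n}:=U(\tau_n)\mathsf{d}_k$. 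The main obstacle is the bookkeeping in $(2)\Rightarrow(3)$: one must check both that $\mathbb{M}_{\mathbb{S}}$ is a genuine left-inverse and that its columns are $L_{\tau_n}$-translates of the first; both rely on the Abelian character of $\mathbf{H}$ to ensure that $\tau_n^{-1}\tau_i^{-1}$ always coincides with $\tau_m^{-1}$ for a (unique) $\tau_m\in\mathbf{H}$, so that the matching between row-groups and column indices in the construction is consistent.
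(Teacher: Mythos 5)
Your proposal is correct and follows exactly the route the paper intends: the paper gives no separate proof of Theorem~\ref{teoGeneralH}, stating only that the argument is ``completely analogous'' to Theorem~\ref{teoGeneral}, and your write-up is precisely that analogue, with the roles of $\mathbf{N}$ and $\mathbf{H}$ exchanged and the Abelian character of $\mathbf{H}$ correctly identified as what makes the coset bookkeeping in the construction of $\mathbb{M}_{\mathbb{S}}$ work.
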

\section{An illustrative example: sampling associated with the dihedral group $D_{2N}$}
\label{section4}
In Geometry, the dihedral group $D_{2N}$ refers to the symmetries of the $N$-gon; it has order $2N$. 
The group $D_{2N}=\mathbf{N}\mathbf{H}$ where $\mathbf{N}=\{e,r,r^2,\dots,r^{N-1}\}$ is the group generated by $r$, the rotation of $2\pi/N$ radians and $\mathbf{H}=\{e,s\}$ being $s$ the axial reflection.  In this case $\mathbf{N}$ is a normal subgroup of $D_{2N}$ and, as a consequence, $D_{2N}$ is  commonly referred as the internal semidirect product of $\mathbf{N}$ and $\mathbf{H}$. Since the subgroups $\mathbf{N}$ and $\mathbf{H}$ of $D_{2N}$ are both abelians, the sampling theorems in Theorems  \ref{teoGeneral} and \ref{teoGeneralH} apply. 

\medskip

Let $D_{2N} \ni g \mapsto U(g)\in \Uc(\Hc)$ be a unitary representation of $D_{2N}$ on a Hilbert space $\Hc$. Fixed $\mathsf{a}\in \Hc$, we consider $\Ac_\mathsf{a}$ the  subspace of $\Hc$ spanned by $\big\{U(g)\mathsf{a}\,:\, g\in D_{2N}\big\}$. In case this set is linearly independent in $\Hc$ it can be described as
\[
\Ac_\mathsf{a}=\Big\{\sum_{g\in D_{2N}} \alpha_g U(g)\mathsf{a} \,:\, \alpha_g\in \CC\Big\}\subset \Hc\,.
\]
Assume that $\kappa$ $\Lc_k$-systems are defined on $\Ac_\mathsf{a}$ from $\kappa$ vectors $\mathsf{b}_k \in \Hc$, $k=1, 2, \dots, \kappa$, as in \eqref{gsamplesH}. In this case, each $2\times 2N$ block $\mathbb{R}_{\mathsf{a},\mathsf{b}_k}$ of the $2\kappa \times 2N$ matrix $\mathbb{R}_{\mathsf{a},\mathbf{b}}$ in \eqref{matrixccH} is given by
\[
  \mathbb{R}_{\mathsf{a},\mathsf{b}_k}=
      \begin{pmatrix}
          r_{\mathsf{a},\mathsf{b}_k}(\mathbf{N})& r_{\mathsf{a},\mathsf{b}_k}(s \mathbf{N})\\
          r_{\mathsf{a},\mathsf{b}_k}(s \mathbf{N})& r_{\mathsf{a},\mathsf{b}_k}( \mathbf{N})
      \end{pmatrix}
\]
If $\rank \mathbb{R}_{\mathsf{a},\mathbf{b}}=2N$, according to Theorem \ref{teoGeneralH} there exist vectors $\mathsf{d}_k\in \Ac_\mathsf{a}$, $k=1, 2, \dots,\kappa$, with $\kappa \geq N$ such that the sequence $\big\{U(h) \mathsf{d}_k\big\}_{k=1,2,\ldots, \kappa;\, h\in \mathbf{H}}$ is a frame for $\Ac_\mathsf{a}$ and, for any 
$\mathsf{f}\in \Ac_\mathsf{a}$ we obtain the sampling expansion
\begin{equation*}
 \mathsf{f}=\sum_{k=1}^\kappa \sum_{h\in\mathbf{H}}\Lc_k\mathsf{f}(h)\,U(h)\mathsf{d}_k\,.
\end{equation*}
Moreover, $\mathsf{d}_k=\Tc_{\mathsf{a}}^{D_{2N}}(\widetilde{\mathbf{s}}_{k,e})$ where $\widetilde{\mathbf{s}}_{k,e}$, $k=1,2,\dots,\kappa$, denote the corresponding column of a $2N\times 2\kappa$ left-inverse $\mathbb{M}_{\mathbb{S}}$ of the matrix $\mathbb{R}_{\mathsf{a},\mathbf{b}}$.

Analogously, we can take generalized samples indexed on $\mathbf{N}$ as in \eqref{gsamples} from $\kappa$ vectors $\mathsf{b}_k \in \Hc$, $k=1,2,\dots,\kappa$. In this case each $2\times 2N$ block $\mathbb{R}_{\mathsf{a},\mathsf{b}_k}$ of the $2\kappa\times 2N$ matrix $\mathbb{R}_{\mathsf{a},\mathbf{b}}$ in \eqref{matrixcc} is given by
\[
  \mathbb{R}_{\mathsf{a},\mathsf{b}_k}=
      \begin{pmatrix}
          r_{\mathsf{a},\mathsf{b}_k}(\mathbf{H})& r_{\mathsf{a},\mathsf{b}_k}(r \mathbf{H}) & \cdots& r_{\mathsf{a},\mathsf{b}_k}(r^{n-1}\mathbf{H})\\
          r_{\mathsf{a},\mathsf{b}_k}(r^{n-1}\mathbf{H})& r_{\mathsf{a},\mathsf{b}_k}(r^{n-2} \mathbf{H}) & \cdots& r_{\mathsf{a},\mathsf{b}_k}(\mathbf{H})\\
          \vdots             &\vdots                &\cdots & \vdots\\
          r_{\mathsf{a},\mathsf{b}_k}(r \mathbf{H})& r_{\mathsf{a},\mathsf{b}_k}(\mathbf{H}) & \cdots& r_{\mathsf{a},\mathsf{b}_k}(r^2 \mathbf{H})
      \end{pmatrix}
\]
If $\rank \mathbb{R}_{\mathsf{a},\mathbf{b}}=2N$, according to Theorem \ref{teoGeneral} there exist vectors 
$\mathsf{c}_j\in \Ac_\mathsf{a}$, $k=1, 2, \dots, \kappa$, with $\kappa \geq 2$ such that the sequence $\big\{U(n) \mathsf{c}_k\big\}_{k=1,2,\ldots, \kappa;\,n\in \mathbf{N}}$ is a frame for 
$\Ac_\mathsf{a}$ and, for any $\mathsf{f}\in \Ac_\mathsf{a}$ we have the sampling expansion
\begin{equation*}
 \mathsf{f}=\sum_{k=1}^\kappa \sum_{n\in \mathbf{N}} \Lc_k\mathsf{f}(n)\,U(n)\mathsf{c}_k\,.
\end{equation*}
Moreover, $\mathsf{c}_k=\Tc_{\mathsf{a}}^{D_{2N}}(\widetilde{\mathbf{s}}_{k,e})$ where $\widetilde{\mathbf{s}}_{k,e}$, $k=1,2,\dots,\kappa$\,, denote the corresponding column of a $2N\times 2\kappa$ left-inverse $\mathbb{M}_{\mathbb{S}}$ of $\mathbb{R}_{\mathsf{a},\mathbf{b}}$.

\medskip

\noindent{\bf Acknowledgments:} 
This work has been supported by the grant MTM2017-84098-P from the Spanish {\em Ministerio de Econom\'{\i}a y Competitividad (MINECO)}.

\vspace*{0.3cm}

\end{document}